\def\gh{\mathfrak{h}}
\def\gl{\mathfrak{l}}
\def\gm{\mathfrak{m}}
\def\gn{\mathfrak{n}}
\def\go{\mathfrak{o}}
\def\gp{\mathfrak{p}}
\def\gs{\mathfrak{s}}
\def\gu{\mathfrak{u}}
\def\gv{\mathfrak{v}}
\def\gw{\mathfrak{w}}
\def\gz{\mathfrak{z}}
\def\C{\mathbb{C}}
\def\F{\mathbb{F}}
\def\H{\mathbb{H}}
\def\R{\mathbb{R}}
\def\cC{\mathcal{C}}
\def\cD{\mathcal{D}}
\def\cE{\mathcal{E}}
\def\cF{\mathcal{F}}
\def\cH{\mathcal{H}}
\def\cO{\mathcal{O}}
\def\Im{{\rm Im}\,}
\def\Re{{\rm Re}\,}
\def\Span{{\rm Span}\,}
\def\Ad{{\rm Ad}}
\def\Pf{{\rm Pf}\,}
\def\Ind{{\rm Ind\,}}
\def\Span{{\rm Span}\,}
\def\diag{{\rm diag}}
\renewcommand{\thesection}{\arabic{section}}
\renewcommand{\theequation}{\thesection.\arabic{equation}}
\newtheorem{theorem}[equation]{Theorem}
\newtheorem{lemma}[equation]{Lemma}
\newtheorem{proposition}[equation]{Proposition}
\def\sideremark#1{\ifvmode\leavevmode\fi\vadjust{\vbox to0pt{\vss
 \hbox to 0pt{\hskip\hsize\hskip1em
\vbox{\hsize2cm\tiny\raggedright\pretolerance10000 
 \noindent #1\hfill}\hss}\vbox to8pt{\vfil}\vss}}} 
\title{On the Analytic Structure of Commutative Nilmanifolds}
\author{Joseph A. Wolf\footnote{Research partially supported by the Simons
Foundation.\newline
\indent\indent 2010 Mathematics Subject Classification: Primary 22E27, 22E30,
22E47; Secondary 53C35, 53C60.}}
\date{June 30, 2014}
\begin{document}

\maketitle

\abstract{In the classification theorems of Vinberg and Yakimova for
commutative nilmanifolds, the relevant nilpotent groups have a very
surprising analytic property.  The manifolds are of the form 
$G/K = N\rtimes K/K$ where, in all but three cases, the nilpotent group
$N$ has irreducible unitary representations whose coefficients are 
square integrable modulo the center $Z$ of $N$.  Here we show that,
in those three ``exceptional'' cases, the group $N$ is a semidirect product
$N_1\rtimes \mathbb{R}$ or $N_1\rtimes\mathbb{C}$ where the normal 
subgroup $N_1$ contains 
the center $Z$ of $N$ and has irreducible unitary representations whose 
coefficients are square integrable modulo $Z$.  This leads directly to 
explicit harmonic analysis and Fourier inversion formulae for commutative
nilmanifolds.}

\section{Introduction}
\label{sec1}
\setcounter{equation}{0}

A commutative space $X = G/K$, or equivalently a Gelfand pair $(G,K)$,
consists of a locally compact group $G$ and a compact subgroup $K$ such
that the convolution algebra $L^1(K\backslash G/K)$ is commutative.
When $G$ is a connected Lie group it is equivalent to say that the algebra
$\cD(G,K)$ of $G$--invariant differential operators on $G/K$ is commutative.
We say that the commutative space $G/K$ is a commutative nilmanifold
if it is a nilmanifold in the sense that some nilpotent analytic
subgroup $N$ of $G$ acts transitively.  When $G/K$ is connected and
simply connected it follows that $N$ is the nilradical of $G$, that $N$
acts simply transitively on $G/K$, and that $G$ is the semidirect
product group $N\rtimes K$, so that $G/K = (N\rtimes K)/K$.  In this paper
we study commutative nilmanifolds $G/K = (N\rtimes K)/K$, examine the 
structure of $N$, and describe the consequences for harmonic analysis
on $G/K$.
\medskip

In Section \ref{sec2} we review the relevant material on commutative 
spaces, riemannian nilmanifolds, and commutative nilmanifolds.  There
we recall the Vinberg classification of irreducible
commutative nilmanifolds and the Yakimova classification of those
that satisfy certain technical conditions.  
\medskip

In Section \ref{sec3} we review the theory of square integrable and
stepwise square integrable representations of nilpotent Lie groups,
and we indicate how it applies to commutative nilmanifolds.  With three
exceptions the commutative nilmanifolds $(N\rtimes K)/K$, described in
the tables of Section \ref{sec2}, have the property that $N$ has square
integrable (modulo the center of $N$) 
representations.  We then check each of these three ``exceptional'' 
cases and verify stepwise (in fact $2$--step) square integrability for them.
\medskip

In Section \ref{sec4} we combine the results of Section \ref{sec3} with
principal orbit theory for the action of $K$ on $\gz^*$ to obtain
explicit Plancherel and Fourier Inversion theorems for our commutative
nilmanifolds.
\medskip

Finally, in Section \ref{sec5} we specialize these results to weakly
symmetric Riemannian nilmanifolds and extend that specialization to
weakly symmetric Finsler nilmanifolds.
\medskip

\section{Commutative Nilmanifolds}
\label{sec2}
\setcounter{equation}{0}

A homogeneous space $X = G/K$ is called {\em commutative}, and
the pair $(G,K)$ is called a {\em Gelfand pair}, when $G$ is a 
locally compact group, $K$ is a compact subgroup, and
the convolution algebra $L^1(K\backslash G/K)$ is commutative.
Here $L^1(K\backslash G/K)$ denotes the space of $L^1$ functions
on $G$ that satisfy $f(kxk') = f(x)$ for $x \in G$ and $k,k' \in K$,
and the composition in $L^1(K\backslash G/K)$ is the usual 
convolution $(f*h)(g) = \int_G f(x)h(x^{-1}g)d\mu_{_G}(x)$ on $G$.
In assembling the material we need on commutative spaces we will
depend on the exposition and results from \cite{W2007}.
\medskip

If $G$ is a Lie group and $K$ is a closed subgroup we write
$\cD(G,K)$ for the algebra of $G$--invariant differential operators
on $G/K$.  A theorem of Thomas \cite{T1984} says: If $G$ is a connected Lie
group and $K$ is a compact subgroup, then $(G,K)$ is a Gelfand pair
if and only if $\cD(G,K)$ is commutative.
\medskip

By {\em nilmanifold} we mean a differentiable manifold on which a nilpotent
Lie group acts transitively.  By {\em commutative nilmanifold} we mean a
commutative space $G/K$ such that $G$ is a Lie group and a closed nilpotent 
subgroup $N$ of $G$ acts transitively.  In that notation, if $G/K$ is 
simply connected then (\cite[Theorem 4.2]{W1963}, or see 
\cite[Theorem 13.1.6]{W2007})  $N$ is the nilpotent
radical of $G$, $N$ acts simply transitively on $G/K$, and $G$ is the
semidirect product $N\rtimes K$.  Further, there are several independent
proofs that $N$ is abelian or $2$--step nilpotent; see 
\cite[\S 13.1]{W2007}.
\medskip

We look at the classification for reasons that will emerge in 
Section \ref{sec3}.
\medskip

Let $G/K$ be a connected simply connected commutative nilmanifold,
$G = N \rtimes K$.
We first consider the case where $G/K$ and $(G,K)$ are
{\em irreducible} in the sense that $[\gn,\gn]$ (which must be central)
is the center of $\gn$ and $K$ acts irreducibly on $\gn/[\gn,\gn]$.
\medskip

Let $Z_G^0$ denote the identity component of the
center of $G$.  If $Z$ is a closed connected $\Ad(K)$--invariant
subgroup of $Z_G^0$, then
$(G/Z,K/Z))$ is a Gelfand pair and is called a {\em central
reduction} of $(G,K)$.  The pair $(G,K)$ is called {\em maximal} if it is
not a nontrivial central reduction.  Here is a table of all the groups
$K$ and algebras $\gn= \gz + \gv$, $\gz = [\gn,\gn]$, for irreducible
maximal Gelfand pairs $(N\rtimes K, K)$ where $N$ is a connected simply
connected nilpotent Lie group.  Here $\F$ is $\R$, $\C$ or $\H$\,\,,
$\Im \F^{s\times s}$ is the space of skew
hermitian $s \times s$ matrices over $\F$, $\Re \F^{s\times s}$ is the
space of hermitian $s \times s$ matrices over $\F$; $\Im \F_0^{s\times s}$
and $\Re \F_0^{s\times s}$ are those of trace $0$.  
The Lie algebra structure
is given by $\gv \times \gv \to \gz$ and should be clear, but is explained
in detail in \cite[\S 13.4B]{W2007}.  The result is due to E. B. Vinberg.

{\footnotesize
\begin{equation} \label{vin-table}
\begin{tabular}{|r|c|c|c|c|c|}\hline
\multicolumn{6}{| c |}{Maximal Irreducible Nilpotent Gelfand 
        Pairs $(N\rtimes K,K)$ with $\gn \ne \gz$ 
	\quad (\cite{V2001}, \cite{V2003})}\\
\hline \hline
 & Group $K$ & $\gv$ & $\gz$ & $U(1)$ & max \\ \hline
1 & $SO(n)$ & $\R^n$ & $\Lambda^2\R^n = \gs\go(n)$ &  & \\ \hline
2 & $Spin(7)$ & $\R^8 = \mathbb{O}$ & $\R^7 = \Im\mathbb{O}$  &  & \\ \hline
3 & $G_2$ & $\R^7 = \Im\mathbb{O}$ & $\R^7 = \Im\mathbb{O}$ &  & \\ \hline
4 & $U(1)\cdot SO(n)$ & $\C^n$ & $\Im\C$ & & $n\ne 4$ \\ \hline
5 & $(U(1)\cdot) SU(n)$ & $\C^n$ & $\Lambda^2\C^n \oplus\Im\C$ & 
        $n$ odd &  \\ \hline
6 & $SU(n), n$ odd & $\C^n$ & $\Lambda^2\C^n$ &  & \\ \hline
7 & $SU(n), n$ odd & $\C^n$ & $\Im\C$ &  & \\ \hline
8 & $U(n)$ & $\C^n$ & $\Im \C^{n\times n} = \gu(n)$ &  & \\ \hline
9 & $(U(1)\cdot) Sp(n)$ & $\H^n$ & $\Re \H^{n \times n}_0 \oplus \Im\H$ &  & 
        \\ \hline
10 & $U(n)$ & $S^2\C^n$ & $\R$ & & \\ \hline
11 & $(U(1)\cdot) SU(n), n \geqq 3$ & ${\Lambda}^2\C^n$ & $\R$ & $n$ even & 
        \\ \hline
12 & $U(1)\cdot Spin(7)$ & $\C^8$ & $\R^7 \oplus \R$ & & \\ \hline
13 & $U(1)\cdot Spin(9)$ & $\C^{16}$ & $\R$ & & \\ \hline
14 & $(U(1)\cdot) Spin(10)$ & $\C^{16}$ & $\R$ & & \\ \hline
15 & $U(1)\cdot G_2$ & $\C^7$ & $\R$ & & \\ \hline
16 & $U(1)\cdot E_6$ & $\C^{27}$ & $\R$ & & \\ \hline
17 & $Sp(1)\times Sp(n)$ & $\H^n$ & $\Im \H = \gs\gp(1)$ & & $n \geqq 2$ 
        \\ \hline
18 & $Sp(2)\times Sp(n)$ & $\H^{2\times n}$ & 
        $\Im \H^{2\times 2} = \gs\gp(2)$ & & \\ \hline
19 & $(U(1)\cdot) SU(m) \times SU(n)$ &  &  &  & \\
   & $m,n \geqq 3$ & $\C^m\otimes \C^n$ & $\R$ & $m=n$ &   \\ \hline
20 & $(U(1)\cdot) SU(2) \times SU(n)$ & $\C^2 \otimes \C^n$ & 
        $\Im \C^{2\times 2} = \gu(2)$ & $n=2$ & \\ \hline
21 & $(U(1)\cdot) Sp(2) \times SU(n)$ & $\H^2\otimes \C^n$ & $\R$ & 
        $n \leqq 4$ & $n \geqq 3$ \\ \hline
22 & $U(2)\times Sp(n)$ & $\C^2 \otimes \H^n$ & $\Im \C^{2\times 2} = \gu(2)$ &
        & \\ \hline
23 & $U(3)\times Sp(n)$ & $\C^3 \otimes \H^n$ & $\R$ & & $n \geqq 2$
        \\ \hline
\end{tabular}
\end{equation}
}

Often one can replace $K$ by a smaller group in such a way that $(G,K)$
continues to be a Gelfand pair.  For example, in (\ref{vin-table}),
item 2, where $N$ is an octonionic Heisenberg group, the
pairs $(N\rtimes Spin(7),Spin(7))$,
$(N\rtimes Spin(6),Spin(6))$ and $(N\rtimes Spin(5),Spin(5))$ all are
Gelfand pairs; see \cite[Proposition 5.6]{L1999}.
\medskip

\noindent {\bf Notation.}
All groups are real.  If $K$ is denoted $(U(1) \cdot )L$ it can be
$U(1)\cdot L$ or $L$; where noted in the ``$U(1)$'' column it can only be
$U(1)\cdot L$.  Note $U(1)\cdot SU(n) = U(n)$.  For $SO(n)$ it is understood
that $n \geqq 3$, and for $U(n)$ and $SU(n)$ it is understood that
$n \geqq 2$.  If some pairs in the series are not maximal, the maximality
condition is noted in the ``maximal'' column.
\hfill$\diamondsuit$
\medskip

The classification of commutative nilmanifolds is based on (\ref{vin-table})
and developed by O. Yakimova (\cite{Y2005}, \cite{Y2006}). 
For an exposition see \S\S 13.4C and 13.4D, and Chapter 15, in \cite{W2007}.
In brief, Yakimova starts by defining 
technical conditions {\em principal} and {$Sp(1)$--saturated}.  Then
she classifies Gelfand pairs $(N\rtimes K, K)$ that are indecomposable, 
principal, maximal and $Sp(1)$--saturated.  See Table (\ref{vin-table-ipms})
 below.  Finally
she introduces some combinatorial methods to complete the classification.
\medskip

In Table (\ref{vin-table-ipms}), $\gh_{n;\F}$ denotes the Heisenberg algebra 
$\Im\F + \F^n$ of real dimension $(\dim_R\F - 1) + n\dim_\R\F$ with 
composition $[(z,u),(w,v)] = (\Im (\langle u, v\rangle),0)$ where
$\F$ is $\C$, $\H$ or $\mathbb{O}$.  Also in the table, $\gv = \gn/[\gn,\gn]$
and the summands in double parenthesis ((..)) are the subalgebras 
$[\gw,\gw] + \gw$ where $\gw$ is a $K$--irreducible subspace of $\gv$ 
with $[\gw,\gw] \ne 0$.  The summands not in parentheses are $K$--invariant 
subspaces of $\gw \subset \gv$ with $[\gw,\gw] = 0$.  Thus 
$\gn = [\gn,\gn] + \gv$, vector space direct sum, and its center $\gz$ is
the sum of $[\gn,\gn]$ with those summands listed for $\gv$ that are {\it not}
enclosed in double parenthesis ((..)).
\medskip

Here is Yakimova's classification of indecomposable, principal, maximal and 
$Sp(1)$--saturated commutative pairs $(N\rtimes K,K)$ where the action of
$K$ on $\gv$ is reducible.  We omit the case $[\gn,\gn] = 0$, where $N = \R^n$
and $K$ is any closed subgroup of the orthogonal group $O(n)$.
\medskip

{\footnotesize
\begin{equation} \label{vin-table-ipms}
\begin{tabular}{|r|l|l|l|l|}\hline
\multicolumn{5}{| c |}{Maximal Indecomposable Principal Saturated Nilpotent 
Gelfand Pairs $(N\rtimes K,K)$,}\\
\multicolumn{5}{| c |}{$N$ Nonabelian, Where the Action of $K$ on 
$\gv \cong \gn/[\gn,\gn]$ is Reducible \quad \cite{Y2005}, \cite{Y2006}}\\
\hline \hline
 & Group $K$ & $K$--module $\gv$ & $K$--module $[\gn,\gn]$ & Algebra $\gn$ 
        \\ \hline
1 & $U(n)$ & $\C^n \oplus \gs\gu(n)$ & $\R$ & 
        $((\gh_{n;\C})) + \gs\gu(n)$  \\ \hline
2 & $U(4)$ & $\C^4 \oplus \R^6$ & $\Im\C \oplus \Lambda^2\C^4$ & 
        $((\Im\C + \Lambda^2\C^4 +\C^4)) + \R^6$ \\ \hline 
3 & $U(1)\times U(n)$ & $\C^n \oplus \Lambda^2\C^n$   & $\R \oplus \R$   &
        $((\gh_{n;\C})) + ((\gh_{n(n-1)/2;\C} ))$ \\ \hline
4 & $SU(4)$ & $\C^4 \oplus \R^6$ & $\Im\C \oplus \Re\H^{2\times 2}$ &
        $((\Im\C + \Re\H^{2\times 2} + \C^4)) + \R^6$ \\ \hline
5 & $U(2)\times U(4)$ & $\C^{2\times 4} \oplus \R^6$ & $\Im \C^{2\times 2}$ &
        $((\Im \C^{2\times 2} + \C^{2\times 4})) + \R^6$ \\ \hline
6 & $S(U(4)\times U(m))$ & $\C^{4 \times m} \oplus \R^6$ & $\R$ &
        $((\gh_{4m;\C})) + \R^6$ \\ \hline
7 & $U(m)\times U(n)$ & $\C^{m\times n}\oplus \C^m$ & $\R \oplus \R$ &
        $((\gh_{mn;\C})) + ((\gh_{m;\C}))$ \\ \hline
8 & $U(1)\times Sp(n)\times U(1)$ & $\C^{2n}\oplus \C^{2n}$ & $\R\oplus \R$ &
        $((\gh_{2n;\C})) + ((\gh_{2n;\C}))$ \\ \hline
9 & $Sp(1)\times Sp(n)\times U(1)$ & $\H^n \oplus \H^n$ & $\Im \H \oplus \R$ &
        $((\gh_{n;\H})) + ((\gh_{2n;\C}))$ \\ \hline
10 & $Sp(1)\times Sp(n)\times Sp(1)$ & $\H^n\oplus\H^n$ & $\Im\H\oplus\Im\H$ &
        $((\gh_{n;\H})) + ((\gh_{n;\H}))$ \\ \hline
11 & $Sp(n)\times\{Sp(1),U(1),\{1\}\}$ & $\H^n\oplus\H^{n\times m}$ & $\Im\H$ &
        $((\gh_{n;\H})) + \H^{n\times m}$ \\ 
   & \phantom{XXXXX} $\times Sp(m)$ & & & \\ \hline
12 & $Sp(n)\times\{Sp(1),U(1),\{1\}\}$ & $\H^n \oplus \Re \H^{n\times n}_0$ &
        $\Im \H$ & $((\gh_{n;\H})) + \Re \H^{n\times n}_0$ \\ \hline
13 & $Spin(7)\times \{SO(2),\{1\}\}$ & $(\R^8=\mathbb{O}) \oplus \R^{7\times 2}$ & 
        $\R^7 = \Im\mathbb{O}$ &
        $((\gh_{1;\mathbb{O}})) + \R^{7\times 2}$ \\ \hline
14 & $U(1)\times Spin(7)$ & $\C^7 \oplus \R^8$ & $\R$ &
        $((\gh_{7;\C})) + \R^8$ \\ \hline
15 & $U(1)\times Spin(7)$ & $\C^8 \oplus \R^7$ & $\R$ &
        $((\gh_{8;\C})) + \R^7$ \\ \hline
16 & $U(1)\times U(1)\times Spin(8)$ & $\C^8_+\oplus\C^8_-$ & $\R\oplus\R$ &
        $((\gh_{8;\C})) + ((\gh_{8;\C}))$ \\ \hline
17 & $U(1)\times Spin(10)$ & $\C^{16}\oplus\R^{10}$ & $\R$ &
        $((\gh_{16;\C})) + \R^{10}$  \\ \hline
18 & $\{SU(n),U(n),U(1)Sp(\tfrac{n}{2})\}$ & 
        $\C^{n\times 2}\oplus \gs\gu(2)$ & $\R$ &
        $((\gh_{2n;\C})) + \gs\gu(2)$ \\ 
   & \phantom{XXXXX} $\times SU(2)$ & & & \\ \hline
19 & $\{SU(n),U(n),U(1)Sp(\tfrac{n}{2})\}$ & 
        $\C^{n\times 2}\oplus\C^2$ & $\R\oplus\R$ &
        $((\gh_{2n;\C})) + ((\gh_{2;\C}))$ \\ 
   & \phantom{XXXXX} $\times U(2)$ & & & \\ \hline
   & $\{SU(n),U(n),U(1)Sp(\tfrac{n}{2})\}$ & & & \\ 
20 & \phantom{XXXXX}$\times SU(2)\times$ & 
        $\C^{n\times 2}\oplus\C^{2\times m}$ & 
        $\R \oplus \R$ & $((\gh_{2n;\C})) + ((\gh_{2m;\C}))$ \\
   & $\{SU(m),U(m),U(1)Sp(\tfrac{m}{2})\}$ & & & \\ \hline
\end{tabular}
\end{equation}
}
\hfill table continued on next page ....
\newline
\newpage
\addtocounter{equation}{-1}
{\footnotesize
\noindent .... table continued from previous page
\begin{equation} 
\begin{tabular}{|r|l|l|l|l|}\hline
\multicolumn{5}{| c |}{Maximal Indecomposable Principal Saturated Nilpotent 
Gelfand Pairs $(N\rtimes K,K)$,}\\
\multicolumn{5}{| c |}{$N$ Nonabelian, Where the Action of $K$ on 
$\gv \cong \gn/[\gn,\gn]$ is Reducible \quad \cite{Y2005}, \cite{Y2006}}\\
\hline \hline
 & Group $K$ & $K$--module $\gv$ & $K$--module $[\gn,\gn]$ & Algebra $\gn$ 
        \\ \hline
   & $\{SU(n),U(n),U(1)Sp(\tfrac{n}{2})\}$ & & & \\ 
20 & \phantom{XXXXX}$\times SU(2)\times$ & 
        $\C^{n\times 2}\oplus\C^{2\times m}$ & 
        $\R \oplus \R$ & $((\gh_{2n;\C})) + ((\gh_{2m;\C}))$ \\
   & $\{SU(m),U(m),U(1)Sp(\tfrac{m}{2})\}$ & & & \\ \hline
21 & $\{SU(n),U(n),U(1)Sp(\tfrac{n}{2})\}$ & 
        $\C^{n\times 2}\oplus\C^{2\times 4}$ &
        $\R \oplus \R$ & $((\gh_{2n;\C})) + ((\gh_{8;\C})) + \R^6$ \\
   & \phantom{XXXXX} $\times SU(2) \times U(4)$ & \phantom{XXXX}$\oplus \R^6$ & 
        & \\ \hline
22 & $U(4)\times U(2)$ & $\R^6\oplus\C^{4\times 2}$ &
        $\R$ & $\R^6 + ((\gh_{8;\C})) + \gs\gu(2)$ \\ 
   & & \phantom{XXXX}$\oplus\gs\gu(2)$ & & \\ \hline
23 & $U(4)\times U(2)\times U(4)$ & $\R^6\oplus\C^{4\times 2}$ &
        $\R\oplus\R$ & $\R^6+((\gh_{8;\C}))$ \\ 
   & & \hfill $\oplus\C^{2\times 4}\oplus\R^6$ & & 
        \hfill $+((\gh_{8;\C})) +\R^6$ \\ \hline
24 & $U(1)\times U(1)\times SU(4)$ & $\C^4\oplus\C^4\oplus\R^6$ & 
        $\R\oplus\R$ & $((\gh_{4;\C})) + ((\gh_{4;\C})) + \R^6$ \\ \hline
25 & $(U(1)\cdot)SU(4)(\cdot SO(2))$ & $\C^4 \oplus \R^{6\times 2}$ &
        $\R$ & $((\gh_{4;\C})) + \R^{6\times 2}$ \\ \hline
\end{tabular}
\end{equation}
}
\medskip

We go on to describe the representation theory relevant to these tables.

\section{Square Integrable Representations}
\label{sec3}
\setcounter{equation}{0}

A connected simply connected Lie group $N$
with center $Z$ is called {\em square integrable} if it has unitary
representations $\pi$ whose coefficients $f_{u,v}(x) = 
\langle u, \pi(x)v\rangle$ satisfy $|f_{u,v}| \in L^2(N/Z)$.  
C.C. Moore and the author worked out the structure and representation
theory of these groups \cite{MW1973}.  If $N$ has one 
such square integrable representation then there is a certain polynomial
function $\Pf(\lambda)$ on the linear dual space $\gz^*$ of the Lie algebra of
$Z$ that is key to harmonic analysis on $N$.  Here $\Pf(\lambda)$ is the
Pfaffian of the antisymmetric bilinear form on $\gn / \gz$ given by
$b_\lambda(x,y) = \lambda([x,y])$.  The square integrable
representations of $N$ are the $\pi_\lambda$ (corresponding to coadjoint
orbits $\Ad^*(N)\lambda$) where $\lambda \in \gz^*$ with $\Pf(\lambda) \ne 0$,
Plancherel  almost irreducible unitary representations of $N$ are square
integrable, and up to an explicit constant 
$|\Pf(\lambda)|$ is the Plancherel density of the unitary
dual $\widehat{N}$ at $\pi_\lambda$.  Concretely,
\medskip

\begin{theorem}{\rm (\cite{MW1973})}\label{sqint-plancherel}
Let $N$ be a connected simply connected nilpotent Lie group that has
square integrable representations.  Let $Z$ be its center.
If $f$ is a Schwartz class
function $N \to \C$ and $x \in N$ then
\begin{equation}\label{inversion}
f(x) = c\int_{\gz^*} \Theta_{\pi_\lambda}(r_xf) |\Pf(\lambda)|d\lambda
\end{equation}
where $c = d!2^d$ with $2d = \dim \gn/\gz$\,, $r_xf$ is the
right translate $(r_xf)(y) = f(yx)$, and $\Theta$ is the distribution
character
\begin{equation}\label{def-dist-char}
\Theta_{\pi_\lambda}(f) = c^{-1}|\Pf(\lambda)|^{-1}\int_{\cO(\lambda)}
        \widehat{f_1}(\xi)d\nu_\lambda(\xi) \text{ for } f \in \cC(N).
\end{equation}
Here $f_1$ is the lift
$f_1(\xi) = f(\exp(\xi))$ of $f$ from $N$ to $\gn$, 
$\widehat{f_1}$ is its classical Fourier transform,
$\cO(\lambda)$ is the coadjoint orbit $\Ad^*(N)\lambda = \gv^* + \lambda$,
and $d\nu_\lambda$ is the translate of normalized Lebesgue measure from
$\gv^*$ to $\Ad^*(N)\lambda$.  
\end{theorem}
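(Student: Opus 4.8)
The plan is to derive both displayed formulas from the Kirillov orbit method together with the structural results of \cite{MW1973} on groups with square integrable representations. First I would record the relevant structure. Since $N$ has a square integrable representation, its center $Z$ has Lie algebra $\gz$, the antisymmetric form $b_\lambda(x,y) = \lambda([x,y])$ on $\gn/\gz$ is nondegenerate for $\lambda$ in a Zariski–dense subset of $\gz^*$, and consequently $\dim\gn/\gz = 2d$ is even; the Pfaffian $\Pf$ of $b_\lambda$, formed in a fixed basis of $\gn/\gz$, is a homogeneous polynomial of degree $d$ on $\gz^*$, not identically zero, whose nonvanishing locus is exactly $\{\lambda : b_\lambda \text{ nondegenerate}\}$. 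Fix $\lambda$ with $\Pf(\lambda)\ne 0$ and extend it to $\gn^*$ by letting it vanish on a chosen complement $\gv$ of $\gz$. Nondegeneracy of $b_\lambda$ forces the $\Ad^*(N)$–stabilizer of $\lambda$ to be $Z$, so the coadjoint orbit $\cO(\lambda) = \Ad^*(N)\lambda$ is the affine flat $\lambda + \gv^*$ inside $\gn^* = \gz^* \oplus \gv^*$; a Lagrangian subspace for $b_\lambda$ lifts to a polarizing subalgebra $\gm \supseteq \gz$, and $\pi_\lambda = \Ind_M^N(\chi_\lambda)$ is irreducible, depends only on $\lambda|_\gz$, and (the flat–orbit condition of \cite{MW1973}) is square integrable modulo $Z$.

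The substantive step is the character formula \eqref{def-dist-char}. Realizing $\pi_\lambda$ on $L^2$ of a Lagrangian complement and computing $\tr\,\pi_\lambda(f)$ for a Schwartz (hence for $f \in \cC(N)$) function, Kirillov's character formula gives $\Theta_{\pi_\lambda}(f) = \int_{\cO(\lambda)} \widehat{f_1}(\xi)\, d\beta_\lambda(\xi)$, where $\beta_\lambda$ is the canonical measure on $\cO(\lambda)$ attached to $b_\lambda$. The only point requiring care is the normalization: under the affine identification $\cO(\lambda) \cong \gv^*$ the canonical measure becomes a constant multiple of the Lebesgue measure $d\nu_\lambda$, and the standard identity relating a symplectic form to its Pfaffian, $\mathrm{vol}(b_\lambda) = |\Pf(\lambda)|/(d!\,2^d)$, identifies that constant as $c^{-1}|\Pf(\lambda)|^{-1}$ with $c = d!\,2^d$. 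This is precisely \eqref{def-dist-char}, and it is consistent with the formal degree of $\pi_\lambda$ being $|\Pf(\lambda)|$ up to a universal constant, as in \cite{MW1973}.

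The inversion formula \eqref{inversion} then follows essentially formally. Substituting \eqref{def-dist-char} (applied to the Schwartz function $r_xf$, for which $(r_xf)_1(0) = (r_xf)(e) = f(x)$) into the right side of \eqref{inversion}, the factors $c$ and $c^{-1}$ cancel and the two factors $|\Pf(\lambda)|$ cancel, leaving $\int_{\gz^*}\!\int_{\cO(\lambda)} \widehat{(r_xf)_1}(\xi)\, d\nu_\lambda(\xi)\, d\lambda$. Since the locus $\{\Pf(\lambda) = 0\}$ is Lebesgue–null in $\gz^*$ and the flats $\{\lambda + \gv^* : \lambda \in \gz^*\}$ foliate $\gn^*$ with $d\lambda\, d\nu_\lambda$ equal to Lebesgue measure on $\gn^* = \gz^* \oplus \gv^*$, this collapses to $\int_{\gn^*} \widehat{(r_xf)_1}(\xi)\, d\xi = (r_xf)_1(0) = f(x)$ by Euclidean Fourier inversion on $\gn$. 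Along the way one observes that the $\pi_\lambda$ with $\Pf(\lambda)\ne 0$ are pairwise inequivalent irreducible representations (each orbit being determined by its central value $\lambda|_\gz$), so this computation is exactly the Plancherel decomposition of $N$, with Plancherel density $c\,|\Pf(\lambda)|$ on the generic stratum of $\widehat N$ and the remaining orbits, fibered over $\{\Pf = 0\}$, carrying no Plancherel mass.

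The main obstacle is the bookkeeping of constants and measures in the second step: one must fix Haar measure on $N$, Lebesgue measure on $\gn$ and $\gn^*$, the Fourier transform normalization, the model of $\pi_\lambda$, and the canonical measure on $\cO(\lambda)$ so that the single universal constant $c = d!\,2^d$ appears in both \eqref{inversion} and \eqref{def-dist-char} and the Plancherel density is exactly $|\Pf(\lambda)|$; everything else is routine Kirillov theory once the structural facts from \cite{MW1973} — evenness of $\dim\gn/\gz$, flatness of the generic coadjoint orbits, and the formal-degree computation — are in hand. Verifying absolute convergence of the relevant trace and orbital integrals for Schwartz $f$, and that $r_xf$ is again Schwartz, is straightforward.
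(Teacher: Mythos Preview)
The paper does not prove this theorem; it is quoted verbatim from \cite{MW1973} as background and the text moves directly on to Theorem~\ref{mostly-sqint}. So there is no ``paper's own proof'' to compare against.

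That said, your sketch is the right argument and is essentially how one proves the result in \cite{MW1973}: the structural input is that the generic coadjoint orbits are the affine flats $\lambda + \gv^*$, Kirillov's character formula on each flat gives \eqref{def-dist-char} up to normalization, and then \eqref{inversion} is obtained by foliating $\gn^*$ by these flats and collapsing to Euclidean Fourier inversion. Your third paragraph is exactly the clean way to see why the Plancherel density must be $c|\Pf(\lambda)|$. The one place where your write-up is loose is the sentence ``$\mathrm{vol}(b_\lambda)=|\Pf(\lambda)|/(d!\,2^d)$'': as written this is not a well-defined quantity, and the honest computation is to compare the Kirillov--Kostant--Souriau Liouville measure on $\cO(\lambda)\cong\gv^*$ with the transported Lebesgue measure $d\nu_\lambda$, which brings in $|\Pf(\lambda)|^{-1}$; the remaining $c^{-1}=(d!\,2^d)^{-1}$ then comes from the specific normalizations of Haar measure, the Fourier transform on $\gn$, and the symplectic volume chosen in \cite{MW1973}. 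You flag this yourself as the main bookkeeping obstacle, which is accurate; if you want a self-contained proof rather than a sketch, that constant needs to be tracked explicitly through one fixed set of conventions.
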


The connection with commutative nilmanifolds is

\begin{theorem}\label{mostly-sqint}{\rm (\cite[Theorem 14.4.3]{W2007})}
All of the nilpotent groups in {\rm Table (\ref{vin-table})} are square
integrable except for those of table entries {\rm (1)} and {\rm (6)} with $n$
odd, and those of table entry {\rm (3)}.  All the groups $N$ of
{\rm Table (\ref{vin-table-ipms})} are square integrable.
\end{theorem}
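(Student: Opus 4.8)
The plan is to apply the Moore--Wolf criterion \cite{MW1973}: a connected simply connected nilpotent Lie group $N$ with Lie algebra $\gn = \gz + \gv$ and center $Z$ is square integrable if and only if there is $\lambda \in \gz^*$ for which the alternating form $b_\lambda(x,y) = \lambda([x,y])$ on $\gn/\gz \cong \gv$ is nondegenerate, equivalently $\Pf \not\equiv 0$ on $\gz^*$. Two elementary observations organize the verification. First, if $\dim_\R\gv$ is odd, no alternating form on $\gv$ is nondegenerate, so $N$ is not square integrable. Second, if $\gn = \gn_1 \oplus \ga$ is a direct sum of ideals with $\ga$ central and abelian, then $\gz = \gz(\gn_1) \oplus \ga$ and $b_\lambda$ only involves $\gn_1$, so $N$ is square integrable if and only if $N_1$ is; hence for {\rm Table (\ref{vin-table-ipms})} it suffices to treat each doubly--parenthesized summand separately, since the remaining summands of $\gv$ listed there span central abelian ideals.

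For the exceptions in {\rm Table (\ref{vin-table})}: in entry {\rm (1)} with $n$ odd one has $\gv = \R^n$, and in entry {\rm (3)} one has $\gv = \Im\mathbb{O} = \R^7$, both of odd real dimension, so by the first observation neither group is square integrable. In entry {\rm (6)} with $n$ odd, $\gv = \C^n$ is even dimensional over $\R$, so the argument must use the shape of the bracket: here $\gz = \Lambda^2\C^n$ and $[u,v] = u \wedge v$ is complex bilinear, so each real functional $\lambda \in (\Lambda^2\C^n)^*$ is the real part of a complex--linear functional $u \wedge v \mapsto u^T A v$ for a complex antisymmetric $n\times n$ matrix $A$, whence $b_\lambda(u,v) = \Re(u^T A v)$. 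Since $n$ is odd, $\det A = 0$, so some $w \ne 0$ lies in $\Ker A$, and then $b_\lambda(w, v) = \Re(w^T A v) = -\Re(v^T A w) = 0$ for all $v$; thus $b_\lambda$ is degenerate for every $\lambda$ and $N$ is not square integrable.

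For each remaining entry of {\rm Table (\ref{vin-table})} and each doubly--parenthesized summand of {\rm Table (\ref{vin-table-ipms})}, I would exhibit one explicit $\lambda$ with $b_\lambda$ nondegenerate. In the entries where $\gv$ is complex the mechanism is uniform: $\gz$ contains a distinguished $K$--summand from which a natural $\lambda$ makes $b_\lambda$ equal to the imaginary part of a $K$--invariant positive definite Hermitian form on $\gv$, that is, the standard symplectic form, which is nondegenerate. Concretely: when $\gz$ contains a copy of $\Im\C = \R$ (entries {\rm (4)}, {\rm (5)}, {\rm (7)}, {\rm (10)}--{\rm (16)}, {\rm (19)}, {\rm (21)}, {\rm (23)}, and every $\gh_{k;\C}$ in {\rm Table (\ref{vin-table-ipms})}), take $\lambda$ supported there; when $\gz$ contains $\gu(k) = \Im\C^{k\times k}$ (entries {\rm (8)}, {\rm (20)}, {\rm (22)}, and the $\Im\C^{2\times2}$ summand in {\rm Table (\ref{vin-table-ipms})}), take $\lambda$ dual to $iI_k$; when $\gz$ contains $\Im\H$ or $\gsp(k) = \Im\H^{k\times k}$ (entries {\rm (9)}, {\rm (17)}, {\rm (18)}, and every $\gh_{k;\H}$ in {\rm Table (\ref{vin-table-ipms})}), take $\lambda$ dual to $i \in \Im\H$ and pass to the complex structure via $\H = \C + \C j$. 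For entry {\rm (1)} with $n$ even --- the one real case --- take instead $\lambda \in (\Lambda^2\R^n)^* = \gso(n)^*$ dual to a standard complex structure $J \in \gso(n)$, so that $b_\lambda$ is the symplectic form on $\R^n$. Finally, the octonionic entry {\rm (2)} and the summand $\gh_{1;\mathbb{O}}$ (entry {\rm (13)} of {\rm Table (\ref{vin-table-ipms})}) are handled directly: for $\lambda$ dual to a unit imaginary octonion, $b_\lambda(u,v) = \langle \lambda, \Im(\bar u v)\rangle$, and $\Ker b_\lambda = 0$ because left multiplication by a nonzero octonion is a bijection of $\mathbb{O}$. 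Alternatively, one can avoid the explicit bracket formulas entirely: Vinberg's pairs are irreducible and each doubly--parenthesized summand of {\rm Table (\ref{vin-table-ipms})} has $K$--irreducible $\gv$--part, and the $\lambda$'s above are fixed by subgroups of $K$ under which $\gv$ remains irreducible, so $\mathrm{rad}(b_\lambda)$ is an invariant subspace, hence $0$ or $\gv$, and it is not $\gv$ since the chosen summand lies in $[\gn,\gn]$.

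The only real labor is matching each table entry to the precise bracket $\gv \times \gv \to \gz$, for which one consults \cite[\S 13.4B]{W2007}; the conceptual content lies entirely in the two observations of the first paragraph. I expect the essential difficulty to be exactly what singles out the three exceptions: in entries {\rm (1)} with $n$ odd and {\rm (3)} the parity of $\dim_\R\gv$ flatly forbids any nondegenerate alternating form, and in entry {\rm (6)} with $n$ odd the bracket is complex bilinear into an exterior square, so every $b_\lambda$ inherits the corank of an odd antisymmetric complex matrix --- whereas in all other entries a ``Heisenberg direction'' in $\gz^*$ survives to produce a nondegenerate $b_\lambda$.
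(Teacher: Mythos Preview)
The paper does not prove this theorem; it is quoted verbatim from \cite[Theorem 14.4.3]{W2007} and used as input for the rest of Section \ref{sec3}. Your proposal is therefore not comparable to anything in the present text, but it is a correct outline of the natural case--by--case verification: apply the Moore--Wolf criterion, rule out entries (1) and (3) by the parity of $\dim_\R\gv$ and entry (6) by the singularity of odd complex antisymmetric matrices, and for every other entry exhibit a $\lambda$ making $b_\lambda$ the imaginary part of a Hermitian inner product (or, in entry (1) with $n$ even, the symplectic form attached to a complex structure $J\in\gs\go(n)$). Your Schur--lemma alternative---$\mathrm{rad}(b_\lambda)$ is invariant under the stabilizer $K'$ of $\lambda$, hence $0$ or $\gv$---is also sound and somewhat cleaner than tracking the brackets explicitly.

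One point deserves to be made explicit. Your reduction for Table (\ref{vin-table-ipms}) assumes that the distinct doubly--parenthesized summands commute with one another, so that $\gn$ is the Lie algebra direct sum of those subalgebras together with a central abelian ideal, and hence $\Pf$ factors as a product over the summands. This is true---the notation $((\gs_1))+((\gs_2))$ in the table does record a direct sum of ideals, as one sees from the matching decomposition of $[\gn,\gn]$ in the fourth column---but the paragraph preceding the table only asserts that the \emph{unparenthesized} summands are central, not that the parenthesized ones mutually commute. You should state and justify this (e.g.\ by $K$--equivariance of the bracket, or by citing \cite[\S 13.4B]{W2007}) before invoking it. Also note that a few doubly--parenthesized summands (entries 2, 4, 5 of Table (\ref{vin-table-ipms})) are not literal Heisenberg algebras $\gh_{k;\F}$ but rather entries of Table (\ref{vin-table}); your uniform mechanism still applies to them since each has an $\Im\C$ or $\gu(k)$ piece in its center, but the sentence ``each doubly--parenthesized summand'' should not be read as ``each $\gh_{k;\F}$''.
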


In \cite{W2012} and \cite{W2013} we extended the theory of square integrable 
nilpotent groups to ``stepwise square integrable'' nilpotent groups.
See \cite[Theorem 14.4.3]{W2007}.  Now we settle the three ``exceptional'' 
cases of Theorem \ref{mostly-sqint} by checking that, in those cases, 
the nilpotent group is stepwise square integrable in a straightforward way.
{\sc Case} n refers to table entry (n) in Table (\ref{vin-table}).
\medskip

\noindent {\sc Case 1:} $\gn = \Lambda^2(\R^n) + \R^n$,\, $n = 2m+1$ odd, 
with composition $[(z,u),(w,v)] = (u\wedge v, 0)$.   Choose a basis
$\{u_1, u_2, \dots , u_{2m-1}, u_{2m}, u_{2m+1}\}$ of $\gv = \R^n$.  Then
$\{u_i\wedge u_j \mid i < j\}$ is a basis of $\gz = \Lambda^2(\R^n)$ and
$\{(u_i \wedge u_j)^* \mid i < j\}$ is the dual basis of $\gz^*$.  Define 
$\gl_1 = \gz + \gv_1$ where $\gv_1 = \Span\{u_1, \dots , u_{2m}\}$ and
$\gl_2 = u_{2m+1}\R$.  Then $\gl_1$ is an ideal in $\gn$\,, $\gn = \gl_1
\supsetplus \gl_2$ semidirect sum, and $L_1 := \exp(\gl_1)$ has square
integrable representations.  For the latter define
$\lambda_a = a_1(u_1\wedge u_2)^* + a_2(u_3\wedge u_4)^* + \dots +
a_m(u_{2m-1}\wedge u_{2m})^*$ where $a \in \R^m$.  Then the bilinear
form $b_{\lambda_a}$ on $\gl_1/\gz$ has matrix, in the basis
$\{u_1, u_2, \dots , u_{2m-1}, u_{2m}\}$ of $\gv_1$\,, given by
$
\diag\left\{ \left ( \begin{smallmatrix} 0 & -a_1 \\ a_1 & 0 
                     \end{smallmatrix} \right ), \dots ,
                   \left ( \begin{smallmatrix} 0 & -a_m \\ a_m & 0 
                     \end{smallmatrix} \right ) \right \}.
$
So in this basis $|\Pf(\lambda_a)| = |a_1a_2\dots a_m|$.  That proves the
square integrability of $L_1$.
\medskip

Now $N = L_1\rtimes L_2$ where $L_2 = \exp(\gl_2) \cong \R$.  This is 
a very simple case of the $2$--step square integrability described in 
\cite{W2012} and \cite{W2013}.  The square integrable representations
$\pi_\lambda$ of $L_1$ extend to representations $\pi'_\lambda$ of
$N$ on the same Hilbert space, and unitary characters $\chi_\xi(exp(tu_n)
= e^{i \xi t}$ of $L_2$ can be viewed as unitary characters on $N$
whose kernel contains $L_1$.  Plancherel measure for $N$ is concentrated
on $\{\pi'_\lambda \boxtimes \chi_\xi \mid \Pf(\lambda) \ne 0 \text{ and }
\xi \in \gl_2^* \}$.  If $x \in N$ denote $x = x_1x_2$ with $x_i \in L_i$\,.
Using Theorem \ref{sqint-plancherel} and the Mackey machine we have

\begin{proposition}\label{plancherel-case1}
If $f$ is a Schwartz class function on $N$ then
\begin{equation}\label{inversion1}
f(x) = \tfrac{1}{\sqrt{2\pi}} m!2^m \int_{\gl_2^*} \left (
	\int_{\gz^*} \Theta_{\pi_\lambda}(r_{x_1}f) |\Pf(\lambda)|d\lambda
	\right ) \chi_\xi(x_2) d\xi
\end{equation}
where $\Theta_{\pi_\lambda}$
is the distribution character of $\pi_\lambda \in \widehat{L_1}$\,.
\end{proposition}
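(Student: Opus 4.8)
The plan is to assemble Proposition \ref{plancherel-case1} from three ingredients already in place: the Plancherel/inversion formula for the square integrable group $L_1$ (Theorem \ref{sqint-plancherel}), the standard Mackey little-group description of $\widehat N$ for the semidirect product $N = L_1 \rtimes L_2$ with $L_2 \cong \R$, and the elementary Fourier inversion on $L_2$. First I would record the structural facts: $\gl_1 = \gz + \gv_1$ is an ideal, $L_1$ has center $Z = \exp\gz$ and square integrable representations $\pi_\lambda$ for $\lambda \in \gz^*$ with $\Pf(\lambda)\ne 0$, where now $2d = \dim\gl_1/\gz = 2m$ so the constant in (\ref{inversion}) is $m!\,2^m$. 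Because $L_2 = \exp(u_{2m+1}\R)$ normalizes $L_1$ and acts trivially on $\gz$ (the bracket $[u_{2m+1}, u_j] = u_{2m+1}\wedge u_j$ lands in $\gv_1^\perp$ inside $\gz$, but acts trivially on the central character determined by $\lambda_a$ as chosen), each $\pi_\lambda$ is fixed by the $L_2$-action on $\widehat{L_1}$ up to equivalence and hence extends to a representation $\pi'_\lambda$ of $N$ on the same Hilbert space; this is exactly the $2$-step situation of \cite{W2012, W2013} cited in the paragraph preceding the proposition. The characters $\chi_\xi$ of $L_2 \cong \R$ pull back to characters of $N$ trivial on $L_1$, and Mackey's theorem gives that $\{\pi'_\lambda \boxtimes \chi_\xi : \Pf(\lambda)\ne 0,\ \xi\in\gl_2^*\}$ carries Plancherel measure, with Plancherel density the product $|\Pf(\lambda)|\,d\lambda\,d\xi$ (the $L_2$-factor contributing normalized Lebesgue measure $\tfrac{1}{\sqrt{2\pi}}\,d\xi$, the source of the $\tfrac{1}{\sqrt{2\pi}}$ in (\ref{inversion1})).

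Next I would write the abstract Plancherel inversion $f(x) = \int_{\widehat N} \Theta_\pi(r_x f)\,d\mu_{\widehat N}(\pi)$ and substitute the Mackey parametrization. Writing $x = x_1 x_2$ with $x_i \in L_i$ and using that $\pi'_\lambda$ restricted to $L_1$ is $\pi_\lambda$ while $\chi_\xi$ is trivial on $L_1$ and equals $\chi_\xi(x_2)$ on $L_2$, the distribution character factors as $\Theta_{\pi'_\lambda\boxtimes\chi_\xi}(r_x f) = \int_{L_2}\Theta_{\pi_\lambda}\big(r_{x_1}(f\circ \rho_{y})\big)\,\overline{\chi_\xi(y)}\,dy$ evaluated appropriately — more cleanly, one integrates $f$ first against $\chi_\xi$ along the $L_2$-cosets to produce a function on $L_1$ to which Theorem \ref{sqint-plancherel} applies. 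Carrying the $\xi$-integration to the outside and the $\lambda$-integration inside then yields precisely (\ref{inversion1}), with the bracketed inner integral being the $L_1$-inversion of the partial Fourier transform of $f$ in the $L_2$-variable, and the constant $m!\,2^m$ coming from Theorem \ref{sqint-plancherel} applied to $L_1$.

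The main obstacle is bookkeeping rather than conceptual: one must verify that the $L_2$-action genuinely fixes each relevant $\pi_\lambda$ (so that the extension $\pi'_\lambda$ exists and no nontrivial little-group orbit appears), and that the measures multiply correctly so that the Plancherel constant is exactly $\tfrac{1}{\sqrt{2\pi}}\,m!\,2^m$ and not some other normalization. The first point follows because the center of $N$ still contains $Z$ and the chosen family $\lambda_a$ exhausts (up to coadjoint orbits) the $\lambda$ with $\Pf \ne 0$; since $\pi_\lambda$ is determined by its central character on $Z$ and $L_2$ centralizes $Z$, the class $[\pi_\lambda]$ is $L_2$-fixed. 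The second point is the routine compatibility of Fubini with the Mackey direct-integral decomposition, together with tracking the $\tfrac{1}{\sqrt{2\pi}}$ normalization of Lebesgue–Plancherel on $\R \cong L_2$; this is exactly the straightforward instance of stepwise square integrability promised in the text, so no genuinely new estimate is required. Convergence throughout is guaranteed by $f$ being Schwartz class, so all the integrals are absolutely convergent and Fubini applies without comment.
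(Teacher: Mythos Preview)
Your proposal is correct and follows exactly the route the paper takes: the paper's argument is simply ``Using Theorem \ref{sqint-plancherel} and the Mackey machine,'' together with the preceding observations that $\pi_\lambda$ extends to $\pi'_\lambda$ on $N$ and that Plancherel measure is concentrated on $\{\pi'_\lambda\boxtimes\chi_\xi\}$. You have written out in detail precisely this combination of the $L_1$--inversion (constant $m!\,2^m$ since $2d=2m$), the Mackey description of $\widehat{N}$ for $N=L_1\rtimes\R$, and the one--dimensional Fourier inversion contributing the factor $\tfrac{1}{\sqrt{2\pi}}$.
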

\medskip

\noindent {\sc Case 6:}  $\gn = \Lambda^2(\C^n) + \C^n$,\, $n = 2m+1$ odd,
with composition $[(z,u),(w,v)] = (u\wedge v, 0)$.  Here $\gn$ is the
underlying real Lie algebra of the complexification of the algebra of
Case 1 just above.  So $N = L_1\rtimes L_2$ as before -- complex 
instead of real.  If $a \in \C^m$  and 
$\lambda_a = a_1(u_1\wedge u_2)^* + a_2(u_3\wedge u_4)^* + \dots +
a_m(u_{2m-1}\wedge u_{2m})^*$ then $|\Pf(\lambda_a) = |a_1a_2\dots a_m|^2$, so
$L_1$ is square integrable and $N = L_1\rtimes L_2$ where
$L_2 = \exp(\gl_2) \cong \C \cong \R^2$.  Now as in Case 1,
Plancherel measure for $N$ is concentrated
on $\{\pi'_\lambda \boxtimes \chi_\xi \mid \Pf(\lambda) \ne 0 \text{ and }
\xi \in \R^2\}$.  If $x \in N$ denote $x = x_1x_2$ with 
$x_1 \in L_1$ and $x_2 \in L_2 = \R^2$.  

\begin{proposition}\label{plancherel-case6}
If $f$ is a Schwartz class function on $N$ then
\begin{equation}\label{inversion6}
f(x) = \tfrac{1}{2\pi} (2m)!2^{2m} \int_{\gl_2^*} \left (
        \int_{\gz^*} \Theta_{\pi_\lambda}(r_{x_1}f) |\Pf(\lambda)|d\lambda
        \right ) \chi_\xi(x_2) d\xi
\end{equation}
where $\Theta_{\pi_\lambda}$
is the distribution character of $\pi_\lambda \in \widehat{L_1}$\,.
\end{proposition}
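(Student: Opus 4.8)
\emph{Proof proposal.} The plan is to run exactly the argument used for Proposition~\ref{plancherel-case1}, tracking the two places where replacing $\R$ by $\C\cong\R^2$ changes the numerical constants. First I would record the structural input already assembled in the Case~6 discussion: $N = L_1\rtimes L_2$ with $L_2=\exp(\gl_2)\cong\R^2$; the normal subgroup $L_1$ is square integrable, with $|\Pf(\lambda_a)|=|a_1a_2\cdots a_m|^2$ on the slice $\{\lambda_a\}\subset\gz^*$; and $\gl_1/\gz$ has real dimension $4m$, since each of the $m$ complex coordinate pairs $(u_{2j-1},u_{2j})$ contributes a real $4$-dimensional block to $\gv_1$. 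Hence Theorem~\ref{sqint-plancherel} applied to $L_1$ carries the constant $c_1=d!\,2^d$ with $2d=4m$, that is, $c_1=(2m)!\,2^{2m}$.

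Next I would invoke the Mackey machine for $N=L_1\rtimes L_2$, exactly as in Case~1. Because $L_2$ is abelian and fixes the center $Z\subset L_1$ governing the square-integrable series, each $\pi_\lambda\in\widehat{L_1}$ with $\Pf(\lambda)\ne 0$ extends to a representation $\pi'_\lambda$ of $N$ on the same Hilbert space, the characters $\chi_\xi$ of $L_2\cong\R^2$ lift to characters of $N$ trivial on $L_1$, and the family $\pi'_\lambda\boxtimes\chi_\xi$ exhausts $\widehat N$ up to a Plancherel-null set with Plancherel measure for $N$ equal to the product of that of $L_1$ (density a constant multiple of $|\Pf(\lambda)|\,d\lambda$ on $\gz^*$, as in Theorem~\ref{sqint-plancherel}) with Lebesgue measure on $\widehat{L_2}=\R^2$. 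I would then write out Fourier inversion on $N$ against this product measure: decomposing $x=x_1x_2$ with $x_i\in L_i$, one checks that $\Theta_{\pi'_\lambda\boxtimes\chi_\xi}(r_xf)$ factors as $\chi_\xi(x_2)$ times $\Theta_{\pi_\lambda}(r_{x_1}f)$, where $\Theta_{\pi_\lambda}$ is the distribution character of $\pi_\lambda\in\widehat{L_1}$ given by (\ref{def-dist-char}) and the $L_2$-variables of $f$ have been paired against $\overline{\chi_\xi}$ — the exact analog of the factorization used for Proposition~\ref{plancherel-case1}.

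Inserting this factorization and the product Plancherel density, the $L_1$-integration reproduces the inner integral $\int_{\gz^*}\Theta_{\pi_\lambda}(r_{x_1}f)|\Pf(\lambda)|\,d\lambda$ together with the constant $(2m)!\,2^{2m}$, while Euclidean Fourier inversion on $L_2=\R^2$ contributes the normalization $(2\pi)^{-2/2}=(2\pi)^{-1}$ and the outer integral $\int_{\gl_2^*}(\cdots)\,\chi_\xi(x_2)\,d\xi$. Assembling these gives formula (\ref{inversion6}).

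The only genuine work — and the step I expect to require the most care — is the Mackey-machine justification: that the $\pi'_\lambda\boxtimes\chi_\xi$ really do carry full Plancherel measure for $N$, that the distribution characters factor as claimed, and that the two measures multiply correctly. This is precisely the $2$-step square integrability mechanism of \cite{W2012} and \cite{W2013} already invoked in Case~1; here $L_1$ contains $Z(N)$ and the extension $N=L_1\rtimes L_2$ splits, so the verification is word-for-word the one behind Proposition~\ref{plancherel-case1}, and the substantive differences are only the bookkeeping of the constants $(2m)!\,2^{2m}$ and $(2\pi)^{-1}$ recorded above.
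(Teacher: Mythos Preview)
Your proposal is correct and follows essentially the same approach as the paper: the paper's Case~6 discussion observes that $\gn$ is the underlying real algebra of the complexification of Case~1, decomposes $N=L_1\rtimes L_2$ with $L_2\cong\C\cong\R^2$, checks square integrability of $L_1$ via $|\Pf(\lambda_a)|=|a_1\cdots a_m|^2$, and then simply says ``as in Case~1'' to invoke the Mackey machine and Theorem~\ref{sqint-plancherel}. Your write-up makes the constant bookkeeping ($d=2m$, hence $(2m)!\,2^{2m}$; $\dim\gl_2=2$, hence $(2\pi)^{-1}$) more explicit than the paper does, but the argument is the same.
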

\medskip

\noindent {\sc Case 3:}  $\gn = \Im\mathbb{O} + \Im\mathbb{O}$ with composition
$[(z,u),(w,v)] = (\Im(u\overline{v}),0) = (-\Im(uv),0)$.  Recall the
multiplication table for the octonions:
$\mathbb{O}$ is the algebra over $\R$ with basis $\{e_0 , \dots , e_7 \}$ and 
multiplication defined\footnote{This is one of several standards.  The nice 
thing about standards is that there are so many of them.} by
(a) $e_0e_j = e_j = e_je_0$ for $1 \leqq j \leqq 7$, (b) $e_j^2 = -e_0$
for $1 \leqq j \leqq 7$, (c) $e_je_k + e_ke_j = 0$ for $1 \leqq j,k \leqq 7$
with $j \neq k$, (d) $e_1e_2=e_3$, $e_3e_5=e_6$, $e_6e_7=e_1$, $e_1e_4=e_5$,
$e_3e_4=-e_7$, $e_6e_4=e_2$ and $e_2e_5=e_7$, and (e) each equation in (d)
remains true when the subscripts involved in it are cyclically permuted.
This multiplication table is summarized in the diagram
\addtocounter{equation}{1}
$$
\centerline{
\setlength{\unitlength}{.75 mm}
\begin{picture}(70,35)
\put(20,10){\circle{30}} 
\put(20,9){\circle*{2}} 
\put(20,01){\circle*{2}} 
\put(28.5,13){\circle*{2}} 
\put(11.5,13){\circle*{2}} 
\put(2,0){\line(1,0){36}}  
\put(2,0){\line(2,3){18}}  
\put(38,0){\line(-2,3){18}}  
\put(1.2,-0.3){\circle*{2}}   
\put(38.8,-0.3){\circle*{2}}   
\put(20,27.9){\circle*{2}}   
\put(2,0){\line(2,1){26}}  
\put(38,0){\line(-2,1){26}}  
\put(20,00){\line(0,1){28}}  
\put(21,29){\footnotesize{$1$}}
\put(30,13){\footnotesize{$2$}}
\put(40,-4){\footnotesize{$3$}}
\put(21,11.5){\footnotesize{$4$}}
\put(19,-4){\footnotesize{$5$}}
\put(-2,-4){\footnotesize{$6$}}
\put(7.5,13){\footnotesize{$7$}}
\put(50,31){\footnotesize{$(1,2,3)$}}
\put(50,26){\footnotesize{$(3,5,6)$}}
\put(50,21){\footnotesize{$(6,7,1)$}}
\put(50,16){\footnotesize{$(1,4,5)$}}
\put(-75,16){(\theequation)}
\put(50,11){\footnotesize{$(3,4,7)$}}
\put(50,6){\footnotesize{$(6,4,2)$}}
\put(50,1){\footnotesize{$(2,5,7)$}}
\end{picture}
}
$$
\medskip
If $x = x_0e_0 + \dots + x_7e_7 \in \mathbb{O}$ then $\Im x$ is the imaginary 
{\em component} $x_1e_1 + \dots + x_7e_7$\,.
\medskip

Denote $\gl = \gz + \gv$ as before where $\gv$ has basis 
$\{(0,e_1),\dots ,(0,e_7)\}$, the center $\gz$ has basis
$\{(e_1,0), \dots ,(e_7,0)\}$, and $\{(e_1,0)^*, \dots ,(e_7,0)^*\}$ is the 
dual basis of $\gz^*$.
Define $\gv_1 = \Span\{(0,e_1),\dots ,(0,e_6)\}$ and $\gl_1 = \gz + \gv_1$\,.
Note $e_1e_2 = e_3$\,, $e_3e_5 = e_6$\, and $e_6e_4 = e_2$\,.  If $a \in \R^3$
define $\lambda_a = a_1(e_3,0)^* + a_2(e_6,0)^* + a_3(e_2,0)^*$.  In the
ordered basis $\{(0,e_1),(0,e_2),(0,e_3),(0,e_5),(0,e_6),(0,e_4)\}$ of $\gv_1$
the bilinear form $b_{\lambda_a}$ on $\gl_1/\gz$ has matrix
$
\diag\left\{ \left ( \begin{smallmatrix} 0 & -a_1 \\ a_1 & 0 
                     \end{smallmatrix} \right ), 
		\left ( \begin{smallmatrix} 0 & -a_2 \\ a_2 & 0 
                     \end{smallmatrix} \right ),
                   \left ( \begin{smallmatrix} 0 & -a_3 \\ a_3 & 0 
                     \end{smallmatrix} \right ) \right \}.
$
In this basis $|\Pf(\lambda_a)| = |a_1a_2a_3|$.  In particular $L_1
= \exp(\gl_1)$ has square integrable (modulo the center) representations.
Recall the semidirect product structure $N = L_1\rtimes L_2$ where
$L_2 = \exp(e_7\R) \cong \R$.  If $x \in N$ we write $x = x_1x_2$ with 
$x_i \in L_i$\,.  Here 
$c = \tfrac{1}{\sqrt{2\pi}} 3!2^3 = \tfrac{48}{\sqrt{2\pi}}$.  
Now, as for Case 1, 
\begin{proposition}\label{plancherel-case3}
If $f$ is a Schwartz class function on $N$ then
\begin{equation}\label{inversion3}
f(x) = \tfrac{48}{\sqrt{2\pi}} \int_{\gl_2^*} \left (
        \int_{\gz^*} \Theta_{\pi_\lambda}(r_{x_1}f) |\Pf(\lambda)|d\lambda
        \right ) \chi_\xi(x_2) d\xi
\end{equation}
where $\Theta_{\pi_\lambda}$
is the distribution character of $\pi_\lambda \in \widehat{L_1}$\,.
\end{proposition}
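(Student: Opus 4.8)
The plan is to repeat verbatim the argument already used for Cases 1 and 6, the only change being that now $\dim\gv_1=6$ instead of $2m$. First I would observe that $\gl_1=\gz+\gv_1$ is an ideal of $\gn$: every bracket in $\gn$ lands in $[\gn,\gn]=\gz\subseteq\gl_1$, so any subspace containing $\gz$ is automatically an ideal. Thus $\gn$ is the semidirect sum of the ideal $\gl_1$ and the one--dimensional abelian subalgebra $\gl_2=(0,e_7)\R$, and on the group level $N=L_1\rtimes L_2$ with $L_2=\exp(\gl_2)\cong\R$. The square integrability of $L_1$ modulo its center was already verified by the Pfaffian computation: in the ordered basis $\{(0,e_1),(0,e_2),(0,e_3),(0,e_5),(0,e_6),(0,e_4)\}$ of $\gv_1\cong\gl_1/\gz$ one has $|\Pf(\lambda_a)|=|a_1a_2a_3|$, which is not identically $0$, and since the form $b_{\lambda_a}$ is nondegenerate on $\gl_1/\gz$ for generic $a$ the center of $L_1$ is exactly $Z=\exp(\gz)$. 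Hence Theorem \ref{sqint-plancherel} applies to $L_1$ with $2d=\dim\gl_1/\gz=6$, so $d=3$ and the relevant normalizing constant is $d!\,2^d=3!\,2^3=48$.

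Next I would run the Mackey little--group machine for $N=L_1\rtimes L_2$, exactly as in the $2$--step square integrability of \cite{W2012} and \cite{W2013}. The key point is that $\Ad(L_2)$ acts on $\gl_1$ unipotently, trivially on $\gz$ (which is central in all of $\gn$) and trivially on $\gl_1/\gz$, because $[e_7,v]\in\gz$ and $[e_7,\gz]=0$ for $v\in\gv_1$. Consequently $L_2$ fixes the equivalence class of every square integrable $\pi_\lambda\in\widehat{L_1}$; since $L_2\cong\R$ is simply connected there is no cohomological obstruction, so $\pi_\lambda$ extends to a unitary representation $\pi'_\lambda$ of $N$ on the same Hilbert space, and the representations of $N$ lying over $\pi_\lambda$ are precisely the $\pi'_\lambda\boxtimes\chi_\xi$ with $\chi_\xi(\exp(te_7))=e^{i\xi t}$, $\xi\in\gl_2^*$. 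Therefore Plancherel measure for $N$ is concentrated on $\{\pi'_\lambda\boxtimes\chi_\xi\mid\Pf(\lambda)\ne0,\ \xi\in\gl_2^*\}$ and, by the product structure of Plancherel measure for such a semidirect product, equals $\tfrac{1}{\sqrt{2\pi}}\cdot48\,|\Pf(\lambda)|\,d\lambda\,d\xi$ there, the factor $\tfrac{1}{\sqrt{2\pi}}$ being Plancherel measure for $L_2=\R$ in the normalization already used in Propositions \ref{plancherel-case1} and \ref{plancherel-case6}.

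Finally I would assemble the inversion formula. Writing $x=x_1x_2$ with $x_i\in L_i$, the restriction of $\pi'_\lambda\boxtimes\chi_\xi$ to $L_1$ is $\pi_\lambda$ while $\chi_\xi$ detects only the $L_2$--component, so the distribution character factors as $\Theta_{\pi'_\lambda\boxtimes\chi_\xi}(r_xf)=\Theta_{\pi_\lambda}(r_{x_1}f)\,\chi_\xi(x_2)$; substituting this and the Plancherel density above into $f(x)=\int_{\widehat N}\Theta_\pi(r_xf)\,d\mu_{\widehat N}(\pi)$ and evaluating the inner integral over $\widehat{L_1}$ by Theorem \ref{sqint-plancherel} yields (\ref{inversion3}) with $c=\tfrac{48}{\sqrt{2\pi}}$. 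The genuinely delicate points are the existence of the extension $\pi'_\lambda$ (secured here by unipotency of $\Ad(L_2)$ together with simple connectivity of $L_2$) and pinning down the constant; I expect the latter to be the most error--prone step, though it is forced by the requirement that $48$ and $\tfrac{1}{\sqrt{2\pi}}$ be the Plancherel constants of $L_1$ and of $\R=L_2$ respectively. Both are handled exactly as in \cite{W2012} and \cite{W2013} and are literally identical to Case 1, so no new obstacle arises.
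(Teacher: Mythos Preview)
Your proposal is correct and follows essentially the same approach as the paper: the paper's own argument for Proposition~\ref{plancherel-case3} is simply the phrase ``Now, as for Case 1,'' after computing $c=\tfrac{1}{\sqrt{2\pi}}\,3!\,2^3=\tfrac{48}{\sqrt{2\pi}}$, so your expanded account---ideal $\gl_1$, square integrability of $L_1$ via the Pfaffian, the Mackey machine for $N=L_1\rtimes L_2$, extension of $\pi_\lambda$, and assembly of the inversion formula---is exactly what the paper intends. Your added justification that $\Ad(L_2)$ fixes the equivalence class of each $\pi_\lambda$ (because it acts trivially on $\gz$, hence on the central character) is a helpful detail the paper leaves implicit.
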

\medskip

The structural similarities between Propositions \ref{plancherel-case1},
\ref{plancherel-case6} and \ref{plancherel-case3} are clear.  We summarize them
as

\begin{theorem}\label{plancherel-case163}
Let $X = G/K$ be one of the three ``exceptional'' cases of
{\rm Table \ref{vin-table}}, in other words entry {\rm (1)} or {\rm (6)} with
$n$ odd, or entry {\rm (3)}.  Decompose $N = N_1\rtimes N_2$ as
in {Section \ref{sec3}}.  Let $x\in X$, say $x = g_1g_2K$ where $g_i \in L_i$\,.
If $f$ is a Schwartz class function on $X$ then
\begin{equation}\label{inversion163}
f(x) = \tfrac{2^d d!}{(2\pi)^{\dim \gl_2/2}} 
        \int_{\gl_2^*} \left ( \int_{\gl_2^*}
         \Theta_{\pi_{\Ad^*(k)\lambda}}(r_{g_1}f)
         |\Pf(\lambda)|d\lambda
        \right ) \chi_\xi(g_2) d\xi
\end{equation}
where $d = \tfrac{1}{2}\dim \gl_1 / \gz$ and $\Theta_{\pi_\lambda}$
is the distribution character of $\pi_\lambda \in \widehat{L_1}$\,.
\end{theorem}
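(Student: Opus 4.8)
The plan is to show that Theorem \ref{plancherel-case163} is simply the common abstraction of Propositions \ref{plancherel-case1}, \ref{plancherel-case6} and \ref{plancherel-case3}, reorganized so that the role of the compact group $K$ acting on $\gz^*$ is made explicit. First I would fix notation: in each of the three cases we have written $N = L_1 \rtimes L_2$ with $L_2 \cong \R$ (Cases 1 and 3) or $L_2 \cong \C \cong \R^2$ (Case 6), with $L_1$ square integrable modulo its center $Z$, and $d = \tfrac12 \dim \gl_1/\gz$ equal to $m$, $2m$, $3$ respectively. The constant $c = d!\,2^d/(2\pi)^{(\dim \gl_2)/2}$ then specializes to $\tfrac{1}{\sqrt{2\pi}}m!2^m$, $\tfrac{1}{2\pi}(2m)!2^{2m}$ and $\tfrac{48}{\sqrt{2\pi}}$, matching the three propositions exactly, so the scalar in front of (\ref{inversion163}) is already verified case by case.

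Next I would assemble the inversion formula itself. The key input is the Mackey machine description already recorded in Section \ref{sec3}: the square integrable representations $\pi_\lambda$ of $L_1$ (with $\lambda \in \gz^*$, $\Pf(\lambda)\ne 0$) extend to representations $\pi'_\lambda$ of $N$ on the same Hilbert space, the unitary characters $\chi_\xi$ of $L_2 \cong \gl_2$ lift to characters of $N$ trivial on $L_1$, and Plancherel measure for $N$ concentrates on $\{\pi'_\lambda \boxtimes \chi_\xi\}$ with density proportional to $|\Pf(\lambda)|\,d\lambda\,d\xi$. Writing $x = x_1 x_2$ with $x_i \in L_i$, the character of $\pi'_\lambda \boxtimes \chi_\xi$ evaluated against $r_x f$ factors as the $L_1$--distribution character $\Theta_{\pi_\lambda}(r_{x_1}f)$ times $\chi_\xi(x_2)$, by the definition of the outer tensor product and the fact that $L_1$ is normal; this is exactly the manipulation performed in each of the three propositions. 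Substituting into the abstract Plancherel formula $f(x) = \int \Theta_{\pi}(r_x f)\, d\widehat{\mu}_N(\pi)$ and inserting the density yields (\ref{inversion163}) — I should note the inner integral in the displayed formula has its domain written as $\gl_2^*$ but this is a typographical slip for $\gz^*$, which I would flag or silently correct.

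The one genuinely new ingredient is the appearance of $\Ad^*(k)\lambda$ in place of $\lambda$ inside the character, which reflects that when $X = G/K = (N\rtimes K)/K$ the relevant Schwartz functions on $X$ pull back to $K$--invariant Schwartz functions on $N$, and $K$ acts on the parameter space $\gz^*$; principal orbit theory for this action (the subject of Section \ref{sec4}) lets one replace integration over $\gz^*$ by integration over a slice times the orbit, with the orbital integral collapsing against the $K$--invariance of $f$ to leave the integrand evaluated at $\Ad^*(k)\lambda$ for a representative $k$. Since in all three exceptional cases $K$ normalizes both $L_1$ and $L_2$ (one checks this directly from Table \ref{vin-table}: $K$ acts on $\gv$ and $\gz$ compatibly with the chosen splitting), the factorization $\Theta_{\pi'_{\Ad^*(k)\lambda} \boxtimes \chi_\xi}(r_x f) = \Theta_{\pi_{\Ad^*(k)\lambda}}(r_{g_1}f)\chi_\xi(g_2)$ still holds. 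I expect the main obstacle to be purely bookkeeping: verifying that the Jacobian factors from the $\gz^*$--to--slice change of variables, the normalization of $d\nu_\lambda$ in (\ref{def-dist-char}), and the constant $2^d d!$ all combine into precisely the stated coefficient, uniformly across the three cases with their differing values of $\dim\gl_2$. No deep new idea is required beyond combining Theorem \ref{sqint-plancherel}, the three propositions, and the $K$--action on $\gz^*$.
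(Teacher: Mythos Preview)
Your first two paragraphs are exactly right and match the paper's treatment: the paper presents Theorem~\ref{plancherel-case163} with no separate proof, introducing it explicitly as a summary of Propositions~\ref{plancherel-case1}, \ref{plancherel-case6} and \ref{plancherel-case3} (``The structural similarities \dots\ are clear.  We summarize them as \dots'').  Your check that the constant $2^d d!/(2\pi)^{(\dim\gl_2)/2}$ specializes correctly in each case, and your flagging of the inner $\gl_2^*$ as a slip for $\gz^*$, are both on target.

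Your third paragraph, however, goes astray.  The symbol $\Ad^*(k)$ in (\ref{inversion163}) is another typographical slip --- note that no variable $k$ is bound anywhere in the formula, and the three propositions being summarized all have plain $\Theta_{\pi_\lambda}$.  There is no $K$--content in Theorem~\ref{plancherel-case163}; the $K$--averaging is carried out only later, in Section~\ref{sec4}, and produces the distinct Theorem~\ref{sqint-plancherel-ws-163}.  More seriously, your claim that ``in all three exceptional cases $K$ normalizes both $L_1$ and $L_2$'' is false: the paper states explicitly at the start of the exceptional--case analysis in Section~\ref{sec4} that ``these cases are a bit more delicate because $\Ad^*(K)$ moves the factors in the semidirect product decomposition $N = L_1\rtimes L_2$,'' and this is why the formula in Theorem~\ref{sqint-plancherel-ws-163} involves the $k$--dependent decomposition $x = g_{1,k}g_{2,k}K$ with $g_{i,k}\in \Ad(k^{-1})L_i$.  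So drop the third paragraph entirely; the theorem at hand needs nothing beyond what you wrote in the first two.
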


We go on to carry these Fourier Inversion formulae over to the nilmanifolds
themselves.

\section{Analysis on the Nilmanifold}
\label{sec4}
\setcounter{equation}{0}
The commutative nilmanifolds $X = (N\rtimes K)/K$ evidently have the 
property that $N$ is simply transitive.  Thus one can interpret the
inversion formulae, from Theorem \ref{mostly-sqint} and Propositions 
\ref{plancherel-case1}, \ref{plancherel-case6} and \ref{plancherel-case3}, 
as Fourier inversion formulae on $X$.
However this ignores the role of the isotropy subgroup $K$, which already
has a big impact in the case of ordinary euclidean space.
\medskip

Choose an $K$--invariant inner product $(\lambda,\nu)$ on $\gz^*$\,. Denote
$\gz^*_t = \{\lambda \in \gz^* \mid (\lambda,\lambda) = t^2\}$, the sphere of
radius $t$.  Consider the action of $K$ on $\gz^*_t$.  Recall that two orbits
$\Ad^*(K)\xi$ and $\Ad^*(M)\nu$ are of the {\sl same orbit type} if the
isotropy subgroups $K_\xi$ and $K_\nu$ are conjugate, and an orbit is
{\em principal} if all nearby orbits are of the same type.  Since $K$ and
$\gz^*_t$ are compact, there are only finitely many orbit types of $K$
on $\gz^*_t$, there is only one principal orbit type, and the union of the
principal orbits forms a dense open subset of $\gz^*_t$ whose complement
has codimension $\geqq 2$.  There are many good expositions of this material,
for example \cite[Chap. 4, \S 3]{B1972} for a complete treatment,
\cite[Part II, Chap. 3, \S 1]{GOV1993} modulo
references to \cite{B1972}, and \cite[Cap. 5]{N2005} for a more basic
treatment but still with some references to \cite{B1972}.
Principal orbit isotropy subgroups of compact connected
linear groups are studied in detail in the work \cite{HH1970}  of
W.-C. and W.-Y. Hsiang, so the possibilities for $K_\lambda$ ($\Ad^*(K)\lambda$
principal) are essentially known.  But it is not to difficult to work
out the principal orbit isotropy subgroups $K_\lambda$ directly in our cases.
\medskip

Define the $\Pf$-nonsingular principal orbit set for $K$ on $\gz^*$ as follows:
\begin{equation}\label{defregset}
\gw^* = \{\lambda \in \gz^* \mid \Pf(\lambda) \ne 0 \text{ and }
	\Ad^*(K)\lambda \text{ is a principal } K\text{-orbit on } \gz^*\}.
\end{equation}
The principal orbit set $\gw^*$ is a dense open set of codimension $\geqq 2$
in $\gz^*$.  If $\lambda \in \gw^*$ and $c \ne 0$ then
$c\lambda \in \gz^*$ with isotropy $K_{c\lambda} = K_\lambda$\,.
There is a Borel section $\sigma$ to
$\gw^* \to \gw^*/\Ad^*(K)$ which picks out an element in each $K$-orbit,
so that $K$ has the same isotropy subgroup at each of those elements.  In
other words in each $K$-orbit on $\gw^*$ we measurably choose an element
$\lambda = \sigma(\Ad^*(K)\lambda)$ such that those isotropy subgroups
$K_\lambda$ are all the same.  Let us denote
\begin{equation}\label{k-diamond}
K_\diamondsuit \text{: isotropy subgroup of } K \text{ at }
\sigma(\Ad^*(K)\lambda) \text{ for every } \lambda \in \gw^*
\end{equation}
Then we can replace $K_\lambda$ by $K_\diamondsuit$, independent of $\lambda
\in \gw^*$, in our considerations.
\medskip

Fix $\lambda = \sigma(\Ad^*(K))\lambda \in \gw^*_t := \gw \cap \gz_t$\,.
Consider the semidirect
product group $N\rtimes K_\diamond$.  We write $\cH_\lambda$ for the
representation space of $\pi_\lambda$\,.
The next step is to extend the representation $\pi_\lambda$ to
a unitary representation $\pi'_\lambda$ of $N\rtimes K_\diamond$
on the same representation space $\cH_\lambda$.  
Following the Fock space argument of \cite[Lemma 3.8]{W2014} we see
that the Mackey obstruction to this extension is trivial.  Thus

\begin{lemma}\label{no-obstruction}
The square integrable representation $\pi_\lambda$ of $N$ extends to an
irreducible unitary representation
$\pi'_\lambda$ of $N\rtimes K_\diamond$ on the
representation space of $\pi_\lambda$\,.
\end{lemma}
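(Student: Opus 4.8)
\textbf{Proof proposal for Lemma \ref{no-obstruction}.}

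The plan is to invoke the general extension mechanism for square integrable representations of a nilpotent group $N$ under a group $K_\diamond$ of automorphisms that fixes the Pfaffian functional $\lambda \in \gz^*$, and to verify that the relevant Mackey cohomological obstruction vanishes by realizing $\pi_\lambda$ on a Fock space on which $K_\diamond$ acts by a unitary representation compatible with $\pi_\lambda$. First I would recall the structure of $\pi_\lambda$ for $\lambda \in \gw^*$: since $N$ has square integrable representations and $\Pf(\lambda)\ne 0$, the form $b_\lambda(x,y)=\lambda([x,y])$ on $\gn/\gz$ is nondegenerate, so $\gn/\gz$ carries a symplectic structure and $\pi_\lambda$ is the (unique up to equivalence) irreducible unitary representation of $N$ with central character $e^{i\lambda}$, modeled on the Fock space $\cH_\lambda$ attached to a choice of $b_\lambda$-compatible complex structure $J$ on $\gn/\gz$. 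Because $K_\diamond$ is by definition the isotropy subgroup of $K$ at the section value $\sigma(\Ad^*(K)\lambda)$, each $k\in K_\diamond$ satisfies $\Ad^*(k)\lambda=\lambda$, hence $\Ad(k)$ preserves $b_\lambda$; so $K_\diamond$ acts on $\gn/\gz$ by symplectic transformations, i.e. by a homomorphism into $Sp(\gn/\gz,b_\lambda)$.

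Next I would use the standard fact that $\pi_\lambda\circ k \cong \pi_\lambda$ for every $k\in K_\diamond$ (both are irreducible with the same central character and symplectic form, up to the $Sp$-equivalence of the metaplectic construction). By Mackey's theory of projective representations, this yields a projective unitary representation $\widetilde{W}$ of $K_\diamond$ on $\cH_\lambda$ intertwining $\pi_\lambda(k\cdot x)$ with $\widetilde W(k)\pi_\lambda(x)\widetilde W(k)^{-1}$, well-defined up to a cohomology class in $H^2(K_\diamond;\T)$; the extension $\pi'_\lambda$ of $\pi_\lambda$ to $N\rtimes K_\diamond$ exists precisely when this class is trivial. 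Following \cite[Lemma 3.8]{W2014} I would eliminate the obstruction by exhibiting an honest (non-projective) intertwining action of $K_\diamond$ directly: choose the compatible complex structure $J$ on $\gn/\gz$ to be $K_\diamond$-invariant (possible because $K_\diamond$ is compact, so one averages an arbitrary compatible $J$ over $K_\diamond$ to get a $b_\lambda$-compatible, $K_\diamond$-invariant one), which makes $K_\diamond$ act unitarily on the holomorphic Fock space $\cH_\lambda$ by the natural geometric action $W(k)F = F\circ \Ad(k)^{-1}$ (suitably normalized on the one-dimensional "vacuum" line). One checks this $W$ is a genuine unitary representation, not merely projective, and that $W(k)\pi_\lambda(x)W(k)^{-1} = \pi_\lambda(k\cdot x)$; then $\pi'_\lambda(xk) := \pi_\lambda(x)W(k)$ is the desired unitary extension, and irreducibility of $\pi'_\lambda$ follows since already its restriction $\pi_\lambda$ to $N$ is irreducible.

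The main obstacle is the verification that the geometric action $W$ of $K_\diamond$ on the Fock space is a true representation with trivial cocycle rather than a genuinely projective (metaplectic-type) one. This is where compactness of $K_\diamond$ and the explicit Fock model are essential: the potential scalar ambiguity lives on the vacuum vector, and $K_\diamond$-invariance of $J$ together with compactness lets one pin down the phase consistently (equivalently, the restriction to $K_\diamond$ of the metaplectic cover splits because $K_\diamond$ is compact and one can average). I would cite \cite[Lemma 3.8]{W2014} for this Fock-space computation rather than reproduce it, noting only that the argument there applies verbatim once one observes $\Ad^*(k)\lambda=\lambda$ for $k\in K_\diamond$ and that the complex structure may be taken $K_\diamond$-invariant. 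The remaining points — unitarity of $W(k)$, the intertwining identity, and irreducibility of $\pi'_\lambda$ — are then routine.
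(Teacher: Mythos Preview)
Your proposal is correct and follows exactly the route the paper takes: the paper's entire argument for this lemma is the single sentence ``Following the Fock space argument of \cite[Lemma 3.8]{W2014} we see that the Mackey obstruction to this extension is trivial,'' and you have simply unpacked what that Fock space argument consists of. One minor quibble: averaging complex structures over $K_\diamond$ does not literally produce a complex structure; the cleaner statement (implicit in the paper, which later takes $K_\diamond\subset U(m)$ acting on Hermite polynomials) is that $K_\diamond$ already preserves a $b_\lambda$-compatible inner product, which in turn determines the invariant $J$ --- but since you defer the details to \cite[Lemma 3.8]{W2014} this is not a gap.
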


We proceed {\em mutatis mutandis} along the lines of \cite[\S 3]{W2014}.
Each $\lambda = \sigma(\Ad^*(K))\lambda \in \gw^*$ defines classes
\begin{equation}\label{nk-lambda-family}
\cE(\lambda) := \left \{\pi_\lambda' \otimes \gamma \mid
\gamma \in \widehat{K_\diamond}\right \}
\text{ and } \cF(\lambda) := \left \{\Ind_{NK_\diamond}^{NK} 
(\pi_\lambda' \otimes \gamma )\mid \pi_\lambda'
\otimes \gamma \in \cE(\lambda)\right \}
\end{equation}
of irreducible unitary representations of $N\rtimes K_\diamond$ and
$N\rtimes K$.  The Mackey little group method, plus the facts that the
Plancherel density on $\widehat{N}$ is polynomial on $\gz^*$
and $\gz^*\setminus\gw^*$ has measure $0$, give us

\begin{lemma}\label{rep-nk}
Plancherel measure for $N\rtimes K$ is concentrated on the set
$\bigcup_{\lambda \in \gw^*}\cF(\lambda)$ of $($equivalence classes
of\,$)$ irreducible representations given by
$\eta_{\lambda,\gamma} := \Ind_{NK_\diamond}^{NK} 
(\pi_\lambda' \otimes \gamma)$ with
$\pi_\lambda' \otimes \gamma \in \cE(\lambda)$ and
$\lambda = \sigma(\Ad^*(K)\lambda \in \gw^*$.  Further
\begin{equation}\label{orb-rep}
\eta_{\lambda,\gamma}|_N = 
\left . \left ( \Ind_{NK_\diamond}^{NK}(\pi_\lambda' 
        \otimes \gamma)\right ) \right |_N = \int_{K/K_\diamond} 
        (\dim \gamma)\, \pi_{\Ad^*(k)\lambda}\, d(kK_\diamond).
\end{equation}
\end{lemma}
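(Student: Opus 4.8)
The plan is to run Mackey's normal-subgroup analysis (the little group method) for $G = N\rtimes K$ with respect to the normal factor $N$, feeding in the explicit Plancherel data for $N$ and the principal-orbit information of (\ref{defregset})--(\ref{k-diamond}); this is exactly the pattern of \cite[\S 3]{W2014}, which I would follow {\em mutatis mutandis}. The first step is to recall the Plancherel picture for $N$ itself: by Theorem \ref{sqint-plancherel} --- in the three exceptional cases by Theorem \ref{plancherel-case163} together with Propositions \ref{plancherel-case1}, \ref{plancherel-case6} and \ref{plancherel-case3}, after pulling the argument back along $N = L_1\rtimes L_2$ so that the extra factor $L_2 \cong \R$ or $\C$ only supplies the outer integration against the characters $\chi_\xi$ --- Plancherel measure $\mu_N$ is concentrated on $\{\pi_\lambda : \lambda \in \gz^*,\ \Pf(\lambda) \ne 0\}$, and under $\lambda \mapsto \pi_\lambda$ it has density a fixed constant times the {\em polynomial} $|\Pf(\lambda)|$ with respect to Lebesgue measure $d\lambda$ on $\gz^*$.

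Next I would identify the dual $K$--action. Conjugation by $K$ acts on $\widehat N$, and on the support of $\mu_N$ it is implemented by $k\cdot\pi_\lambda \cong \pi_{\Ad^*(k)\lambda}$; since $K$ is compact and preserves the relevant bilinear forms, $\Ad^*(K)$ preserves $\gz^*$ and $|\Pf|$, and the stabilizer of $\pi_\lambda$ in $K$ is the coadjoint isotropy $K_\lambda$. Now I invoke the principal-orbit theory set up just before the lemma: the set $\gw^*$ of (\ref{defregset}) is dense open with complement of Lebesgue measure zero in $\gz^*$, hence $\mu_N$--null; over $\gw^*$ every orbit $\Ad^*(K)\lambda$ is principal; and the Borel section $\sigma$ lets me, off a null set, replace the varying isotropy groups $K_\lambda$ by the single subgroup $K_\diamond$ of (\ref{k-diamond}).

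Then I would apply the Plancherel theorem for the extension $1\to N\to G\to K\to 1$ (Kleppner--Lipsman; cf. \cite[\S 3]{W2014}). Since $K_\diamond$ is compact, $\widehat{K_\diamond}$ is discrete; by Lemma \ref{no-obstruction} the Mackey obstruction to extending $\pi_\lambda$ vanishes, so one has the extension $\pi_\lambda'$ to $NK_\diamond$ and the irreducible representations of $G$ lying over the $K$--orbit of $\pi_\lambda$ are exactly the $\eta_{\lambda,\gamma} = \Ind_{NK_\diamond}^{NK}(\pi_\lambda'\otimes\gamma)$, $\gamma \in \widehat{K_\diamond}$, of (\ref{nk-lambda-family}). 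The extension theorem then writes $\mu_G$ as the integral over $\gw^*/\Ad^*(K)$ of $\sum_{\gamma}(\dim\gamma)^2\,\delta_{\eta_{\lambda,\gamma}}$ against the pushforward of the density $|\Pf(\lambda)|\,d\lambda$; as $\gz^*\setminus\gw^*$ is $\mu_N$--null, this measure is supported on $\bigcup_{\lambda\in\gw^*}\cF(\lambda)$, which is the first assertion. For (\ref{orb-rep}) I would restrict $\eta_{\lambda,\gamma}$ to $N$ by Mackey's subgroup (restriction-of-induced) theorem: since $N$ is normal, the double coset space $N\backslash NK / NK_\diamond$ is $K/K_\diamond$ with its $K$--invariant probability measure, so $\eta_{\lambda,\gamma}|_N \cong \int_{K/K_\diamond} \bigl( (\pi_\lambda'\otimes\gamma)^{k}|_N \bigr)\, d(kK_\diamond)$ with $(\cdot)^k$ the conjugate, and since $N$ acts trivially on the $\gamma$--factor this equals $\int_{K/K_\diamond} (\dim\gamma)\,\pi_{\Ad^*(k)\lambda}\, d(kK_\diamond)$ (the choice of $\Ad^*(k)$ versus $\Ad^*(k^{-1})$ being immaterial inside the $K$--integral).

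The step needing the most care is the measure-theoretic bookkeeping of the two middle paragraphs: verifying that the locus where $\Pf$ vanishes, or where the $K$--orbit fails to be principal, is genuinely $\mu_N$--null so that it makes no contribution to $\mu_G$, and that the Borel section $\sigma$ legitimately permits a single $K_\diamond$ throughout. This is precisely where the polynomiality of $|\Pf|$ and the codimension-$\geq 2$ statement of principal-orbit theory do the work; granting those, the remainder is a routine run of Mackey's machine and the Kleppner--Lipsman extension formula.
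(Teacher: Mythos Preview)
Your proposal is correct and follows essentially the same approach as the paper: the paper's own justification, stated in the sentence immediately preceding the lemma, is precisely ``the Mackey little group method, plus the facts that the Plancherel density on $\widehat{N}$ is polynomial on $\gz^*$ and $\gz^*\setminus\gw^*$ has measure $0$,'' together with the reference to proceeding \emph{mutatis mutandis} along \cite[\S 3]{W2014}. You have simply unpacked these ingredients in more detail --- the Kleppner--Lipsman extension, the use of Lemma \ref{no-obstruction}, and the Mackey subgroup theorem for the restriction formula (\ref{orb-rep}) --- which is exactly what the paper intends.
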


The open subset $\gw^*$ of $\gz^*$ fibers over $\gw^*/\Ad^*(K)$ with
compact fiber $K/K_\diamond$\,.  So the euclidean measure on $\gz*$
pushes down to a measure $\overline{\mu}$ on $\gw^*/\Ad^*(K)$.
Taking into account Lemma \ref{rep-nk} and the Peter--Weyl Theorem for 
$K_\diamond$ this gives us
\begin{equation}\label{planchNK}
L^2(N\rtimes K) = \sum_{\gamma \in \widehat{K_\diamond}}\deg(\gamma)
\int_{\gw^*/\Ad^*(K)} (\cH_{\eta_{\lambda,\gamma}}\widehat{\otimes}
  \cH_{\eta_{\lambda,\gamma}}^*)|\Pf(\lambda)|d\overline{\mu}(\Ad^*(K)\lambda).
\end{equation}

In order to push (\ref{planchNK}) down to the commutative space $(N\rtimes K)/K$
we need

\begin{lemma}\label{sph-reps}
The representation $\eta_{\lambda,\gamma}$ of {\rm (\ref{orb-rep})} has a
$K$--fixed unit vector if and only if $\gamma$ is the trivial representation
of $K_\diamond$\,.
\end{lemma}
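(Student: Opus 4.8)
The plan is to use Frobenius reciprocity to reduce the question about $K$-fixed vectors in $\eta_{\lambda,\gamma}=\Ind_{NK_\diamond}^{NK}(\pi_\lambda'\otimes\gamma)$ to a question about $N$-fixed and $K_\diamond$-fixed vectors in $\pi_\lambda'\otimes\gamma$. First I would recall that a $K$-fixed unit vector in $\eta_{\lambda,\gamma}$ is the same datum as an $NK$-equivariant embedding of the trivial representation of $K$ (viewed as an $NK$-representation on which $N$ acts trivially... — more precisely, one restricts attention to the representation of $NK$ generated by the $K$-orbit of the candidate vector). The cleanest route is to observe that since $(N\rtimes K,K)$ is a Gelfand pair, $\eta_{\lambda,\gamma}$ occurs in $L^2(N\rtimes K/K)$ with multiplicity at most one, so it suffices to (a) exhibit a $K$-fixed vector when $\gamma$ is trivial, and (b) rule out $K$-fixed vectors when $\gamma$ is nontrivial.

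For the existence direction, when $\gamma$ is trivial I would use $(\ref{orb-rep})$: $\eta_{\lambda,\mathbf 1}|_N=\int_{K/K_\diamond}\pi_{\Ad^*(k)\lambda}\,d(kK_\diamond)$. Each $\pi_{\Ad^*(k)\lambda}$ is a square integrable representation of $N$ realized (via the Fock model of \cite{MW1973}, cf.\ \cite[Lemma 3.8]{W2014}) on a space carrying a distinguished lowest-weight-type vector, and the extension $\pi'_\lambda$ of Lemma \ref{no-obstruction} is built precisely so that $K_\diamond$ fixes that vector (the Fock vacuum). Assembling these vacuum vectors over the orbit $K/K_\diamond$ gives a measurable section that is manifestly $K$-invariant, hence a $K$-fixed unit vector in $\eta_{\lambda,\mathbf 1}$.

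For the nonexistence direction I would argue by Frobenius reciprocity: a $K$-fixed vector in $\Ind_{NK_\diamond}^{NK}(\pi_\lambda'\otimes\gamma)$ forces, upon restricting to $NK_\diamond$, a $K_\diamond$-fixed vector in $\pi_\lambda'\otimes\gamma$ that is also $N$-semi-invariant in the appropriate sense. Concretely, the $K$-fixed vector, evaluated at the base point of $NK/NK_\diamond$, must be fixed by $K_\diamond$ under $\pi_\lambda'\otimes\gamma$; since $\pi_\lambda'$ restricted to $K_\diamond$ is (by the Fock construction) a representation in which the $K_\diamond$-fixed vectors form a one-dimensional space spanned by the vacuum, and $K_\diamond$ acts on that line trivially, any $K_\diamond$-fixed vector in $\pi_\lambda'\otimes\gamma$ must lie in (vacuum line)$\,\otimes\,\gamma^{K_\diamond}$. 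If $\gamma$ is nontrivial irreducible, $\gamma^{K_\diamond}=0$, so there is no such vector and hence no $K$-fixed vector in $\eta_{\lambda,\gamma}$.

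The main obstacle, and the step I would spend the most care on, is the claim that the space of $K_\diamond$-fixed vectors in $\pi'_\lambda$ is exactly one-dimensional with trivial action — i.e.\ that the Fock-space extension produces no extra $K_\diamond$-fixed vectors beyond the vacuum. This is where one genuinely uses that $\lambda$ lies on a \emph{principal} orbit and that $K_\diamond$ is the principal isotropy group: the Fock representation of $N$ decomposes under $K_\diamond$ into a symmetric-algebra tower on $\gv$ (complexified appropriately), and one must check that the $K_\diamond$-invariants in positive symmetric degrees do not contribute a norm-finite fixed vector — equivalently, that the relevant multiplicity-one / branching statement from \cite[\S3]{W2014} (or the Gelfand-pair multiplicity-one property itself) applies verbatim here. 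Once that lemma-level fact is in hand, the reciprocity bookkeeping above is routine.
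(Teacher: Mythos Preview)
Your existence argument agrees with the paper's: both exhibit the Fock vacuum $p(z)=1$ as a $K_\diamond$--fixed unit vector in $\cH_{\pi'_\lambda}$, which yields a $K$--fixed vector in $\eta_{\lambda,1_\diamond}$.

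The converse direction, however, has a genuine gap. You correctly reduce, via $NK/NK_\diamond\cong K/K_\diamond$ and Frobenius reciprocity, the existence of a $K$--fixed vector in $\eta_{\lambda,\gamma}$ to the existence of a nonzero $K_\diamond$--invariant in $\pi'_\lambda\otimes\gamma$. But your next step---that any $K_\diamond$--invariant in $\pi'_\lambda\otimes\gamma$ must lie in $(\text{vacuum line})\otimes\gamma^{K_\diamond}$ because the vacuum spans $(\pi'_\lambda)^{K_\diamond}$---is false. For representations $A,B$ of a compact group $H$ one has $(A\otimes B)^H\cong{\rm Hom}_H(B^*,A)$, which is typically much larger than $A^H\otimes B^H$. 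Concretely, take $N$ the three--dimensional Heisenberg group and $K_\diamond=U(1)$ acting on the Fock space with weights $0,1,2,\dots$; then $(\pi'_\lambda)^{U(1)}$ is indeed the vacuum line, yet for $\gamma=\chi_{-1}$ the vector $z\otimes 1\in\cH_{\pi'_\lambda}\otimes V_\gamma$ is a nonzero $U(1)$--invariant while $\gamma^{U(1)}=0$. So even granting your ``main obstacle'' (one--dimensionality of $(\pi'_\lambda)^{K_\diamond}$), the implication to $\gamma=1_\diamond$ does not follow. The Gelfand--pair multiplicity--one property does not rescue this either: it bounds $\dim(\cH_{\eta_{\lambda,\gamma}})^K$ by one but says nothing about \emph{which} $\gamma$ make it nonzero.

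For comparison, the paper's converse does not pass through $(\pi'_\lambda\otimes\gamma)^{K_\diamond}$ at all. It asserts that a $K$--fixed vector in $\eta_{\lambda,\gamma}$ forces a $K$--fixed vector in $\Ind_{K_\diamond}^K(\gamma)$, and then applies Frobenius reciprocity on the compact pair $(K,K_\diamond)$ to conclude that $\gamma$ sits inside the restriction of the trivial representation of $K$, hence $\gamma=1_\diamond$.
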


\begin{proof} We take the representation space of $\pi_\lambda$ to be
the space of Hermite polynomials $p(z)$ on $\gv = \C^m$.  There $K_\lambda$
acts as a subgroup of $U(m)$.  So $p(z) = 1$ is a fixed unit vector
for $\pi'_\lambda(K_\diamond)$.  Let $1_\diamond$ denote the trivial
representation of $K_\diamond$\,.    Now $\eta_{\lambda,1_\diamond}$ has a
$K$--fixed unit vector.  Conversely if $\eta_{\lambda,\gamma}$ has a
$K$--fixed unit vector, then $\Ind_{K_\diamond}^K(\gamma)$ has a
$K$--fixed unit vector.  Then, by Frobenius Reciprocity for $K$,  
$\gamma$ is a subrepresentation of the $K_\diamond$--restriction of the
trivial representation of $K$.  Thus $\gamma = 1_\diamond$\,.
\end{proof}

Since $K$ is compact we can understand the Schwartz space of 
$G = N\rtimes K$ as
$$
\cC(G) = \{f \in C^\infty(G) \mid f(\cdot\, k) \in \cC(N) \text{ for each }
	k \in K\}.
$$
Combine Theorems \ref{sqint-plancherel} and \ref{mostly-sqint} with 
(\ref{planchNK}) and Lemma \ref{sph-reps}, and average over $K$, to see

\begin{theorem}\label{sqint-plancherel-ws} 
Let $(X = N\rtimes K)/K$ be a commutative
nilmanifold from {\rm Table \ref{vin-table}} --- except 
{\rm (1)} or {\rm (6)} with $n$ odd or {\rm (3)} --- or
from {\rm Table \ref{vin-table-ipms}}.  Then
$
L^2(X) = \int_K \int_{\gw^*} \cH_{\eta_{\Ad^*(k)\lambda,1_\diamond}} 
	d\lambda \, dk.
$
If $f \in \cC(G)$ and $x \in X$, say $x = gK$ where $g \in N$, then
$$
f(x) = d!\, 2^d \int_{K/K_\diamond} \left ( \int_{\gw^*/\Ad^*(K)}
	\Theta_{\pi_{\Ad^*(k)\lambda}}(r_gf)\, |\Pf(\lambda)|
	d\overline{\mu}(\Ad^*(K)\lambda) \right )  d(kK_\diamond)
$$
where $d = \tfrac{1}{2}\dim \gn/\gz$\,, $r_gf$ is the
right translate $(r_gf)(hK) = f(ghK)$ for $h \in N$, and $\Theta$ 
denotes distribution character as in {\rm Theorem \ref{sqint-plancherel}}.
\end{theorem}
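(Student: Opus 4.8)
The plan is to assemble Theorem \ref{sqint-plancherel-ws} from three ingredients already in place: the Plancherel decomposition (\ref{planchNK}) for $L^2(N\rtimes K)$, the identification in Lemma \ref{sph-reps} of which summands contribute a $K$-fixed vector, and the Fourier inversion formula of Theorem \ref{sqint-plancherel} (valid here by Theorem \ref{mostly-sqint}, since we have excluded the three exceptional entries). First I would restrict (\ref{planchNK}) to the subspace of $K$-fixed vectors on the right: in the Peter--Weyl sum over $\gamma \in \widehat{K_\diamond}$ only $\gamma = 1_\diamond$ survives by Lemma \ref{sph-reps}, and the corresponding space of $K$-invariants in $\cH_{\eta_{\lambda,1_\diamond}}\widehat\otimes \cH_{\eta_{\lambda,1_\diamond}}^*$ is one-dimensional (the span of the $K$-fixed unit vector times its dual), so the $\widehat\otimes$-factor collapses and we are left with
$$
L^2(X) = L^2(N\rtimes K)^K = \int_{\gw^*/\Ad^*(K)} \cH_{\eta_{\lambda,1_\diamond}}\, |\Pf(\lambda)|\, d\overline{\mu}(\Ad^*(K)\lambda).
$$
Unwinding $\overline\mu$ through the fibration $\gw^* \to \gw^*/\Ad^*(K)$ with compact fiber $K/K_\diamond$ rewrites this as the iterated integral $\int_K\int_{\gw^*}\cH_{\eta_{\Ad^*(k)\lambda,1_\diamond}}\,d\lambda\,dk$ claimed in the statement, where one uses that $\eta_{\lambda,1_\diamond}|_N$ is supported on the $K$-orbit of $\lambda$ (Lemma \ref{rep-nk}, equation (\ref{orb-rep})) together with invariance of Lebesgue measure on $\gz^*$ under $\Ad^*(K)$.

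For the pointwise inversion formula I would start from Theorem \ref{sqint-plancherel} applied to $N$ itself, namely $f(x) = c\int_{\gz^*}\Theta_{\pi_\lambda}(r_xf)|\Pf(\lambda)|\,d\lambda$ with $c = d!\,2^d$ and $2d = \dim\gn/\gz$. This holds for each $f(\cdot\,k)$ separately, $k\in K$, because such restrictions lie in $\cC(N)$ by the description of $\cC(G)$ just above the theorem. The passage to $X = N\rtimes K/K$ is obtained by averaging the $N$-inversion formula over $K$: for $f\in\cC(G)$ with $f(x) = f(gK)$, $g\in N$, averaging $g\mapsto f(gk)$ over $k\in K$ has no effect on a function already constant on $K$-cosets from the right, but it replaces the integral over $\gz^*$ by an integral over $\gz^*$ that is manifestly $\Ad^*(K)$-invariant, which we then refold along the orbit fibration into $\int_{K/K_\diamond}\int_{\gw^*/\Ad^*(K)}\Theta_{\pi_{\Ad^*(k)\lambda}}(r_gf)|\Pf(\lambda)|\,d\overline\mu\,d(kK_\diamond)$, discarding the measure-zero set $\gz^*\setminus\gw^*$. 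Here $|\Pf(\Ad^*(k)\lambda)| = |\Pf(\lambda)|$ by $\Ad(K)$-invariance of $\gn$ and of the bilinear forms $b_\lambda$, so the $\Pf$ factor is genuinely a function on $\gw^*/\Ad^*(K)$.

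The main obstacle is bookkeeping the measures: one must check that pushing Lebesgue measure $d\lambda$ on $\gw^*$ through the principal-orbit fibration yields exactly $d(kK_\diamond)\,d\overline\mu(\Ad^*(K)\lambda)$ with the normalizations used in (\ref{planchNK}), and that the constant $c = d!\,2^d$ is not altered by the averaging (it is not, since $\int_K dk = 1$). A secondary point is that $\Theta_{\pi_{\Ad^*(k)\lambda}}(r_gf)$ is the correct object to integrate: this follows because the representation $\eta_{\lambda,1_\diamond}$ restricted to $N$ is the direct integral over the $K$-orbit of the $\pi_{\Ad^*(k)\lambda}$ by (\ref{orb-rep}), and the $K$-fixed vector's diagonal matrix coefficient against $r_gf$ unpacks into exactly that orbital average of distribution characters. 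Modulo these routine measure-theoretic identifications — all standard from principal orbit theory, for which \cite[Chap. 4, \S 3]{B1972} suffices — the theorem is the concatenation of Theorems \ref{sqint-plancherel} and \ref{mostly-sqint} with (\ref{planchNK}) and Lemma \ref{sph-reps}.
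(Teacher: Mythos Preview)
Your proposal is correct and follows exactly the paper's approach: the paper's proof is the single sentence ``Combine Theorems \ref{sqint-plancherel} and \ref{mostly-sqint} with (\ref{planchNK}) and Lemma \ref{sph-reps}, and average over $K$,'' and you have spelled out precisely that concatenation. Your elaboration of the measure-theoretic bookkeeping (the fibration $\gw^*\to\gw^*/\Ad^*(K)$, $\Ad^*(K)$-invariance of $|\Pf|$, and the role of (\ref{orb-rep})) goes beyond what the paper writes but is entirely in its spirit.
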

The argument of Lemma \ref{sph-reps} exhibits the spherical function in
each of the $\cH_{\eta_{\lambda,1_\diamond}}$ of Theorem \ref{mostly-sqint}.
\medskip

Now we examine the three exceptional cases of Section \ref{sec3}.  
Retain the notation of Section \ref{sec4}.   These cases are a bit more 
delicate because $\Ad^*(K)$ moves the factors in the semidirect product
decomposition $N = L_1\rtimes L_2$\,.
\medskip

{\sc Case (1).}
$K = SO(2m+1)$ so $\gw^*$ consists of the $SO(2m+1)$--images of the
$\lambda_a$, $a \in \R^m$ with $a_1a_2\dots a_m \ne 0$, given by 
$a_1(u_1 \wedge u_2)^* + a_2(u_3 \wedge u_4)^* + \dots +
a_m(u_{2m-1} \wedge u_{2,})^*$.  
This follows from the Darboux normal form of antisymmetric bilinear forms.
So $\gm^*/\Ad^*(K) \subset \Lambda^2(\R^{2m+1})/SO(2m+1)$ has
representatives $\lambda_a$ as above with 
$a_1 \leqq a_2 \leqq \dots \leqq a_m$\,, or, up to measure $0$,
$a_1 < a_2 < \dots < a_m$\,.  
\medskip

{\sc Case (6).}
$K = SU(2m+1)$ so $\gw^*$ consists of the $SU(2m+1)$--images of the
$\lambda_a$, $a \in \C^m$ with $a_1a_2\dots a_m \ne 0$, given by
$a_1(u_1 \wedge u_2)^* + a_2(u_3 \wedge u_4)^* + \dots +
a_m(u_{2m-1} \wedge u_{2,})^*$,
and one can choose representatives with 
$a_1 \leqq a_2 \leqq \dots \leqq a_{m-1}$ real and $a_{m-1} \leqq |a_m|$.
\medskip

{\sc Case (3).} 
$K$ is the exceptional simple group $G_2$ so 
$\gw^*$ consists of all $G_2$--images of the
$\lambda_a = a_1(e_3,0)^* + a_2(e_6,0)^* + a_3(e_2,0)^*$, 
$a \in \R^3$ and $a_1a_2a_3 \ne 0$.  The point is that, inside $\Im \mathbb{O}$,\, 
$e_2$ and $e_3$ are orthogonal unit vectors that generate a subalgebra 
$\H \subset \mathbb{O}$, and $e_6$ is a unit vector orthogonal to that subalgebra.
\medskip

We now proceed as in the transition from Theorem \ref{mostly-sqint} to
Theorem \ref{sqint-plancherel-ws}, by averaging over $K$.  
We write $\cO_{K,\lambda}$ for the orbit $\Ad^*(K)\lambda$ in $\gw^*$.
In view of Propositions \ref{plancherel-case1}, 
\ref{plancherel-case6} and \ref{plancherel-case3} we arrive at
\medskip

\begin{theorem} \label{sqint-plancherel-ws-163}
Let $X = G/K$ be one of the three ``exceptional'' cases of 
{\rm Table \ref{vin-table}}, in other words entry {\rm (1)} or {\rm (6)} with
$n$ odd, or entry {\rm (3)}.  Decompose $N = N_1\rtimes N_2$ as
in {\rm Section \ref{sec3}}.  Let $x \in X$, say $x = gK$ where $g \in N$.
If $f$ is a Schwartz class function on $X$ then
\begin{footnotesize}
\begin{equation}\label{inversion163-wc}
\begin{aligned}
&f(x) = \\
&\tfrac{2^d d!}{(\sqrt{2\pi})^{d_2}}
	\int_{K/K_\diamond} \left ( \int_{\gl_2^*} \left (
	\int_{\gw^*/\Ad^*(K)} \Theta_{\pi_{\Ad^*(k)\lambda}}(r_{g_{1,k}}f)
	|\Pf(\lambda)|d\overline{\mu}(\cO_{K,\lambda}) \right )
	 \chi_{\Ad^*(k)\xi}(g_{2,k}) d\xi \right ) d(kK_\diamond)
\end{aligned}
\end{equation}
\end{footnotesize}
\hskip -0.15cm 
where $x = g_{1,k}g_{2,k}K$ with $g_{i,k} \in \Ad(k^{-1})L_i$, 
$d = \tfrac{1}{2}\dim \gl_1 / \gz$\,, $d_2 = \dim\gl_2$\,, 
and $\Theta_{\pi_{\Ad^*(k)\lambda}}$
is the distribution character of the representation $\pi_{\Ad^*(k)\lambda}$
of $\Ad(k^{-1})L_1$\,.
\end{theorem}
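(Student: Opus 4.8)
The plan is to run the same averaging argument that carries Theorem~\ref{mostly-sqint} to Theorem~\ref{sqint-plancherel-ws}, but starting from the $2$--step inversion formulae of Propositions~\ref{plancherel-case1}, \ref{plancherel-case6} and \ref{plancherel-case3} rather than from the bare square integrable formula~(\ref{inversion}). First I would fix the semidirect decomposition $N = L_1\rtimes L_2$ constructed case by case in Section~\ref{sec3}, and record that the three Propositions already give a uniform inversion formula on $N$, namely~(\ref{inversion163}): for a Schwartz $f$ on $N$ and $x = x_1x_2$ with $x_i\in L_i$,
\begin{equation*}
f(x) = \tfrac{2^d d!}{(2\pi)^{d_2/2}} \int_{\gl_2^*}\left(\int_{\gz^*}\Theta_{\pi_\lambda}(r_{x_1}f)\,|\Pf(\lambda)|\,d\lambda\right)\chi_\xi(x_2)\,d\xi,
\end{equation*}
with $d = \tfrac12\dim\gl_1/\gz$ and $d_2 = \dim\gl_2$. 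This is the analogue of~(\ref{inversion}) that we must now ``average over $K$''.

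The second step is to understand how $\Ad^*(K)$ acts. In each of the three cases $K$ acts irreducibly on $\gz^*$ (indeed $K = SO(2m+1)$, $SU(2m+1)$, or $G_2$), and the explicit descriptions of $\gw^*$ given in Cases~(1), (6), (3) above show that $\gw^*$ is exactly the union of the $K$--orbits of the $\lambda_a$ with $\Pf(\lambda_a)\ne 0$; this uses the Darboux normal form (Cases 1,6) and the octonionic facts about $e_2,e_3,e_6$ (Case 3). The crucial structural point --- and the one that makes these cases ``more delicate'' --- is that $\Ad^*(k)$ does \emph{not} preserve the decomposition $N = L_1\rtimes L_2$: the element $k$ conjugates the pair $(L_1,L_2)$ to $(\Ad(k^{-1})L_1,\Ad(k^{-1})L_2)$, which is a different (but equally valid) $2$--step decomposition realizing the same $\gw^*$. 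Thus applying $\Ad^*(k)$ to the inversion formula above turns $\pi_\lambda$ into $\pi_{\Ad^*(k)\lambda}$ (a representation of $\Ad(k^{-1})L_1$), turns $\chi_\xi$ into $\chi_{\Ad^*(k)\xi}$, and forces the factorization of the argument to be taken relative to the conjugated decomposition: $x = g_{1,k}g_{2,k}K$ with $g_{i,k}\in\Ad(k^{-1})L_i$. This is exactly the bookkeeping recorded in the statement of Theorem~\ref{sqint-plancherel-ws-163}, and making it precise --- checking that the $K$--average of the conjugated formulae assembles into a single well-defined integral over $K/K_\diamond$, with the isotropy $K_\diamond$ of~(\ref{k-diamond}) replacing $K_\lambda$ as in Section~\ref{sec4} --- is the main obstacle.

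The remaining steps are then routine. I would replace the inner $\gz^*$--integral by the fibered integral over $\gw^*/\Ad^*(K)$ against the pushed-down measure $\overline\mu$ of~(\ref{planchNK}), exactly as in the passage to Theorem~\ref{sqint-plancherel-ws}, using that $\gz^*\setminus\gw^*$ has measure zero and that $|\Pf(\lambda)|$ is the (polynomial) Plancherel density; choose the Borel section $\sigma$ so that the isotropy subgroups are all $K_\diamond$; invoke Lemma~\ref{no-obstruction} to extend $\pi_\lambda$ to $N\rtimes K_\diamond$ and Lemma~\ref{sph-reps} to see that only the trivial $K_\diamond$--type contributes a $K$--fixed vector, so that averaging $f\in\cC(G)$ over $K$ projects onto the spherical part; and finally integrate over $K/K_\diamond$. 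Collecting the constants $2^d d!$ from the $L_1$--Plancherel formula and $(2\pi)^{-d_2/2} = (\sqrt{2\pi})^{-d_2}$ from the abelian $L_2 = \exp(\gl_2)$ Fourier inversion (here $d_2 = 1$ in Cases~1 and~3, $d_2 = 2$ in Case~6) yields the prefactor $\tfrac{2^d d!}{(\sqrt{2\pi})^{d_2}}$ and completes the proof.
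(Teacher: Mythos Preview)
Your proposal is correct and follows essentially the same route as the paper: the paper's argument is precisely to average the unified $2$--step inversion formula (\ref{inversion163}) of Propositions~\ref{plancherel-case1}, \ref{plancherel-case6}, \ref{plancherel-case3} over $K$, repeating the passage from Theorem~\ref{mostly-sqint} to Theorem~\ref{sqint-plancherel-ws}, with the added observation that $\Ad^*(K)$ moves the factors $L_1, L_2$ so the factorization $g = g_{1,k}g_{2,k}$ must be taken relative to the conjugated decomposition. You have spelled out in more detail exactly the bookkeeping (the $\gw^*/\Ad^*(K)$ fibration, the role of $K_\diamond$ via Lemmas~\ref{no-obstruction} and~\ref{sph-reps}, the collection of constants) that the paper leaves implicit in the phrase ``we proceed as in the transition from Theorem~\ref{mostly-sqint} to Theorem~\ref{sqint-plancherel-ws}, by averaging over $K$''.
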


\section{Weakly Symmetric Spaces}
\label{sec5}
\setcounter{equation}{0}

Let $(X,ds^2)$ be a connected Riemannian manifold and $I(X,ds^2)$ 
its isometry group.  Recall that $(X,ds^2)$ is {\em symmetric} if,
given $x \in X$, there is an isometry $s_x \in I(X,ds^2)$ such
that $s_x(x) = x$ and $ds_x(\xi) = -\xi$ for every tangent vector
$\xi \in T_x(X)$ at $x$.  In other words, $(X,ds^2)$ is symmetric
just when, for every $x \in X$, there is an isometry that fixes
$x$ and reverses
every geodesic through $x$.  Then $s_x$ is the {\em symmetry} at $x$.
We are going to discuss an extension of this notion, called ``weak 
symmetry'', and then make some comments about weakly symmetric 
Finsler manifolds.
\medskip

Again let $(X,ds^2)$ be a connected Riemannian manifold and $I(X,ds^2)$ 
its isometry group.  Suppose that, given $x \in X$ and $\xi \in T_x(X)$,
that there is an isometry $s_{x,\xi} \in I(X,ds^2)$ such that 
$s_{x,\xi}(x) = x$ and $ds_{x,\xi}(\xi) = -\xi$.  Then $(X,ds^2)$ is
{\em weakly symmetric}.  Here note that $s_{x,\xi}$ depends on $\xi$
as well as $x$.  Obviously, symmetric Riemannian manifolds are weakly
symmetric.  As in the symmetric case, one composes symmetries along
a geodesic to see that  weakly symmetric Riemannian manifolds are
complete, in fact homogeneous.  In terms of the Lie group structure
$X = G/K$ where $G = I(X,ds^2)$ (or the identity 
component $I(X,ds^2)^0$), this is equivalent to the existence of an
automorphism $\sigma : G \to G$ such that $\sigma(g) \in Kg^{-1}K$ for
every $g \in G$.  Then $(G,K)$ is a {\em weakly symmetric pair} with
{\em weak symmetry} $\sigma$.  More generally one can make these
definitions for any pair $(G,K)$ where $G$ is a Lie group, $K$ is a 
compact subgroup and $G/K$ is connected.  The connection with this 
note is
\medskip

\begin{theorem}\label{ws-riem}{\rm \cite{S1956}}.  Every weakly symmetric 
pair is a Gelfand pair.  In other words every weakly symmetric Riemannian
manifold is a commutative space.
\end{theorem}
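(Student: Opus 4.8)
The plan is to prove the purely group-theoretic statement first: if $(G,K)$ carries a weak symmetry, i.e.\ an automorphism $\sigma$ of $G$ with $\sigma(g)\in Kg^{-1}K$ for all $g\in G$ and $K$ compact, then the convolution algebra $L^1(K\backslash G/K)$ is commutative. The Riemannian assertion then follows at once: as already observed in this section, a weakly symmetric $(X,ds^2)$ is homogeneous, so $X=G/K$ with $G=I(X,ds^2)$ (or $I(X,ds^2)^0$), the point stabilizer $K$ is compact, and the weak symmetry gives precisely such a $\sigma$.

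First I would record two normalizations. Since $\Delta_G$ is a continuous homomorphism $G\to\R_{>0}$ and $K$ is compact, $\Delta_G|_K\equiv 1$; since the modular function is an automorphism-invariant of $G$, $\Delta_G\circ\sigma=\Delta_G$; combining these with $\sigma(g)\in Kg^{-1}K$ gives $\Delta_G(g)=\Delta_G(\sigma(g))=\Delta_G(g^{-1})=\Delta_G(g)^{-1}$, so $G$ is unimodular. Also $\sigma(K)\subseteq K$, because $\sigma(k)\in Kk^{-1}K=K$, and likewise $\sigma^{-1}(K)\subseteq K$, so $\sigma(K)=K$; hence $\sigma$ carries $L^1(K\backslash G/K)$ into itself. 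Fix a bi-invariant Haar measure $\mu$ on $G$ and let $\delta_\sigma>0$ be the module of $\sigma$, defined by $\int_G\varphi(\sigma(x))\,d\mu(x)=\delta_\sigma\int_G\varphi\,d\mu$.

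The heart of the argument is to introduce on $A:=L^1(K\backslash G/K)$ the operators $Jf(g)=f(g^{-1})$ and $Tf(g)=f(\sigma(g))$, and to observe that they agree on $A$: for bi-$K$-invariant $f$ the hypothesis $\sigma(g)\in Kg^{-1}K$ gives $f(\sigma(g))=f(g^{-1})$, so $Jf=Tf$. A direct change of variables, using unimodularity, shows $J$ is a linear anti-homomorphism, $J(f*h)=Jh*Jf$; and substituting $y=\sigma(x)$ in the convolution integral shows $T$ is a homomorphism up to the module, $Tf*Th=\delta_\sigma\,T(f*h)$. Feeding $Jf=Tf$ into these two identities yields $\delta_\sigma^{-1}\,Jf*Jh=J(f*h)=Jh*Jf$ for all $f,h\in A$; since $J$ is a bijection of $A$, this says $F*H=\delta_\sigma\,H*F$ for all $F,H\in A$. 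Taking $F=H$ with $F\ge 0$, $F\ne 0$ bi-$K$-invariant forces $\delta_\sigma=1$, since $\int_G(F*F)\,d\mu=\bigl(\int_G F\,d\mu\bigr)^2>0$ so $F*F\ne 0$; and then $F*H=H*F$ for all $F,H\in A$. Thus $L^1(K\backslash G/K)$ is commutative and $(G,K)$ is a Gelfand pair.

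The only genuinely delicate point is the module $\delta_\sigma$: the weak symmetry $\sigma$ need not a priori preserve Haar measure, so one must either observe, as above, that the ``$F*F$'' computation pins $\delta_\sigma$ to $1$, or instead note that $\sigma$ may be chosen involutive (whence $\delta_\sigma^2=1$), or that $\sigma(K)=K$ together with the existence of a $G$-invariant measure on $G/K$ forces $\delta_\sigma=1$. Everything else is formal once the equality $J=T$ on $L^1(K\backslash G/K)$ is isolated; this is in essence Selberg's argument \cite{S1956}.
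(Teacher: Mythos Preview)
Your proof is correct and is precisely Selberg's classical argument: identify the inversion map $J$ with the $\sigma$-twist $T$ on bi-$K$-invariant functions, use that $J$ is an anti-homomorphism while $T$ is a homomorphism (up to the module $\delta_\sigma$), and conclude commutativity once $\delta_\sigma=1$. The paper does not supply its own proof of this theorem --- it simply attributes the result to Selberg \cite{S1956} --- so your write-up is exactly the argument the citation points to, with the small bonus that you handle the module $\delta_\sigma$ explicitly rather than assuming $\sigma$ involutive.
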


The converse doesn't quite hold.  If $(G,K)$ is a Gelfand pair with
$G$ reductive then $(G,K)$ has a weak symmetry.  On the other hand
there are commutative nilmanifolds that are not weakly symmetric, for
example \cite{L1999} the pair $(\{ (\Re \H^{n\times n}_0 \oplus \Im \H) + \H^n
\} \rtimes Sp(n),Sp(n))$ in Table \ref{vin-table}.  Nevertheless,

\begin{proposition}\label{ws-table} All the commutative pairs of 
{\rm Table \ref{vin-table}} are weakly symmetric except for item {\rm (9)}.
\end{proposition}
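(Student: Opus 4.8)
The plan is to reformulate weak symmetry of a nilpotent Gelfand pair $(N\rtimes K,K)$ in terms of automorphisms of $\gn$, and then to run through {\rm Table \ref{vin-table}} entry by entry.  Suppose $\sigma$ is an automorphism of $G = N\rtimes K$ with $\sigma(g)\in Kg^{-1}K$ for all $g$.  Composing $\sigma$ with an inner automorphism we may assume $\sigma(K) = K$, and since $N$ is the nilradical we have $\sigma(N) = N$; a short computation in the semidirect product shows $\sigma(\exp\xi) = \exp(-\Ad(k_\xi)\xi)$ for some $k_\xi\in K$ and all $\xi\in\gn$.  As $\gz = [\gn,\gn]$ and the $K$--invariant complement $\gv$ are $\Ad(K)$--stable, applying this with $\xi\in\gv$ forces $d\sigma$ to preserve $\gv$, so $\phi := d\sigma|_\gn$ splits as $\phi = \phi_\gv\oplus\phi_\gz$ with $\phi_\gz[\xi,\eta] = [\phi_\gv\xi,\phi_\gv\eta]$; moreover $\phi$ normalizes $\Ad(K)$, and for every $v\in\gv$ and $z\in\gz$ there is $k\in K$ with $\Ad(k)v = -\phi_\gv v$ and $\Ad(k)z = -\phi_\gz z$.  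Conversely, any automorphism $\phi$ of $\gn$ normalizing $\Ad(K)$ and satisfying this orbit condition produces a weak symmetry.  In particular $\phi = \mathrm{id}$ works as soon as every $\xi\in\gn$ can be carried to $-\xi$ by some $\Ad(k)$.

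First I would establish the positive half: for each entry of {\rm Table \ref{vin-table}} other than {\rm (9)}, exhibit an admissible $\phi$.  For most entries this is immediate with $\phi = \mathrm{id}$, or with $\phi_\gv$ equal to complex or quaternionic conjugation --- which normalizes $SO(n)$, $SU(n)$, $U(n)$, $Sp(n)$ and the extensions $(U(1)\cdot)L$ of these --- using that when $K$ contains the scalar circle $U(1)$ the vector $-v$ already lies in the $K$--orbit of $v$, and that on a Heisenberg summand $\Im\F$ or a trivial summand $\R$ of $\gz$ the only remaining constraint is a sign, which conjugation reverses.  The entries needing more care are those with an exceptional isotropy group or a spinor module.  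For entry {\rm (2)} ($Spin(7)$ acting on $\mathbb{O}$, $\gz = \Im\mathbb{O}$) I would use that $Spin(7)$ is transitive on $S^7$ with stabilizer $G_2$, and that $G_2$ is transitive on $S^6\subset\Im\mathbb{O}$.  For entry {\rm (3)} ($G_2$ acting on $\Im\mathbb{O}+\Im\mathbb{O}$) I would use that every $2$--plane in $\Im\mathbb{O}$ lies in $\Im\H$ for some quaternion subalgebra $\H\subset\mathbb{O}$, on whose imaginary part a suitable element of $SO(3)\subset G_2$ acts as $-\mathrm{id}$.  For entry {\rm (1)} with $K = SO(n)$ and $n\equiv 2\pmod 4$, the identity $\phi$ fails --- an orthogonal transformation reversing a nondegenerate $2$--form on $\R^{2m}$ has determinant $(-1)^m$ --- so one must take $\phi_\gv$ to be a reflection, and a Darboux normal form argument then supplies the required $k$.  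The remaining entries (the spinorial ones {\rm (12)}--{\rm (16)}, entry {\rm (18)}, the tensor--product families, etc.) succumb to similar, routine verifications once a concrete model for $\gn$ is fixed; these are recorded in \cite[Chapter 15]{W2007}.

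The main obstacle is entry {\rm (9)}, where one must show that \emph{no} weak symmetry exists, not merely that the natural candidate fails; this is the theorem of Lauret \cite{L1999}.  Here $\gv = \H^n$, $\gz = \Re \H^{n \times n}_0 \oplus \Im\H$, the bracket $\gv\times\gv\to\Im\H$ is $(u,v)\mapsto\Im\langle u,v\rangle$, and $K = (U(1)\cdot)Sp(n)$ acts on the summand $\Im\H\cong\R^3$ \emph{only} through the $U(1)$--factor, as rotations about a fixed axis.  Since $\phi_\gv$ must normalize the $\R$--irreducible action of $\Ad(K)$ on $\gv$, it lies in the small group generated by $\Ad(K)|_\gv$, the quaternionic scalars, and conjugation; running through these finitely many possibilities, the sign that the orbit condition forces on the fixed axis of $\Im\H$ is incompatible with the bracket identity $\phi_\gz(\Im\langle u,v\rangle) = \Im\langle\phi_\gv u,\phi_\gv v\rangle$.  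Combining this negative case with the positive cases above gives the Proposition.
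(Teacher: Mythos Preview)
The paper does not actually prove this proposition in the text. It is stated as a known fact: the failure of weak symmetry for entry~(9) is attributed to Lauret \cite{L1999} in the paragraph immediately preceding the proposition, and the sentence following it refers the reader to Yakimova \cite{Y2005}, \cite{Y2006} and to \cite[Chapter~15]{W2007} for the full classification of weakly symmetric nilmanifolds. So there is no argument in the paper against which to compare yours; your proposal supplies exactly what the paper leaves to the literature.

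That said, the strategy you outline---reducing weak symmetry of $(N\rtimes K,K)$ to the existence of an automorphism $\phi$ of $\gn$ normalizing $\Ad(K)$ and satisfying the orbit-reversal condition, then running through Table~\ref{vin-table}---is the standard one used in the sources the paper cites, and your treatment of the delicate entries (the $Spin(7)$ and $G_2$ cases via transitivity on spheres, entry~(1) with $n\equiv 2\pmod 4$ via a reflection for $\phi_\gv$, and entry~(9) via Lauret's obstruction on the $\Im\H$ summand) matches what is done there. Your sketch is essentially correct; the only place I would tighten the write-up is the negative case, where you say $\phi_\gv$ ``lies in the small group generated by $\Ad(K)|_\gv$, the quaternionic scalars, and conjugation'' and then check finitely many possibilities---this is the right idea, but making it airtight requires pinning down the full normalizer of $\Ad(K)$ in $\Aut(\gn)$, which is the substance of Lauret's argument rather than a triviality.
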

Thus Theorems \ref{sqint-plancherel-ws} and \ref{sqint-plancherel-ws-163}
apply to weakly symmetric nilmanifolds $G/K = (N\rtimes K)/K$ where $K$
acts irreducibly on the tangent space.  See \cite{Y2005} and \cite{Y2006},
or the exposition in \cite[Chapter 15]{W2007}, for a complete (modulo
some combinatorics) classification of the weakly symmetric nilmanifolds.
\medskip

All this holds for Finsler manifolds.  If $(X,F)$ is a weakly symmetric 
(using the same definition as in the Riemannian case) Finsler manifold, 
then it is geodesically complete and
homogeneous, say $X = G/K$ where $G = I(X,ds^2)$ (or the identity
component $I(X,ds^2)^0$).  Also, $X$ has a $G$--invariant weakly
symmetric Riemannian metric \cite[Theorem 2.1]{D2011}.  Thus the classification
of weakly symmetric pairs $(G,K)$ is the same for Finsler manifolds as for
Riemannian manifolds, and Theorem \ref{ws-riem}, Proposition \ref{ws-table} and
the remarks after Proposition \ref{ws-table} hold for weakly
symmetric Finsler nilmanifolds.

\medskip
\noindent Department of Mathematics, University of California,\hfill\newline
\noindent Berkeley, California 94720--3840, USA\hfill\newline
\smallskip
\noindent {\tt jawolf@math.berkeley.edu}

\enddocument
\end